 \newtheorem{thm}{Theorem}[section]
 \newtheorem{cor}[thm]{Corollary}
 \newtheorem{prop}[thm]{Proposition}
 \theoremstyle{definition}
 \newtheorem{defn}[thm]{Definition}
 \theoremstyle{remark}
 \newtheorem{rem}[thm]{Remark}
 \numberwithin{equation}{section}
\begin{document}
\pagestyle{plain}
{\raggedright{\footnotesize {The article was published in ``{Topics in Mathematics, Computer Science and Philosophy}, {\it A Festschrift for Wolfgang W. Breckner}'', Presa Universitar\u a Clujean\u a, 2008, pp. 107 -- 122. The present version contains some spelling and typing corrections and is differently formatted. For references please use the information above.}}}
\setcounter{page}{107}

\vskip2cm

\title[Dilations on locally Hilbert spaces]{Dilations on locally Hilbert spaces}

\author[D. Ga\c spar]{Dumitru Ga\c spar}



\author[P. Ga\c spar]{P\u astorel Ga\c spar}

\author[N. Lupa]{Nicolae Lupa}

\subjclass{47L40; 46C99}

\keywords{Locally Hilbert spaces, $*$-semigroups, locally positive definite kernels, reproducing kernels}

\date{}

\dedicatory{Dedicated to Wolfgang W. Breckner for his 65th birthday}

\begin{abstract}
The principal theorem of Sz.-Nagy on dilation of a positive definite Hilbert space operator valued function has played a central role in the development of
the non-self-adjoint operator theory. In this paper we introduce the positive definiteness for locally Hilbert space operator valued kernels, we prove an analogue of the Sz.-Nagy dilation theorem
and, as application, we obtain dilation results for locally contractions and locally $\rho$~-~contractions as well as  for  locally semi-spectral measures.
\end{abstract}

\maketitle

\section{Introduction}

It is well known that the class  $\mathcal{B}(H)$  of all bounded linear operators on a Hilbert space $H$ can be organized as a $C^*$~-~algebra, and any $C^*$~-~algebra embeds isometrically in such an operator algebra. At the same time, because the above algebra $\mathcal{B}(H)$ is the dual of the trace-class $\mathscr{C}_{1}(H)$,  it follows that it is a $W^*$~-~algebra and conversely, any $W^*$~-~algebra can be identified up to an algebraic-topological  isomorphism with a weak operator closed $*$~-~subalgebra in $\mathcal{B}(H)$. Hence, the
spectral theory of bounded linear operators on a Hilbert space is developed in close connection with the theory of $C^*$~-~algebras. In recent times, a more general theory, namely that of locally $C^*$~-~algebras (\cite{In.1971}) and of locally $W^*$~-~algebras (\cite{Fr.1986}, \cite{Ma.1986}), is developed. Since such a locally convex $*$~-~algebra embeds in an algebra of continuous linear operators on a so-called locally Hilbert space and since the most important concepts (as selfadjointness, normality, positivity etc.) of the $C^*$~-~ and $W^*$~-~algebras theory extend to the frame of locally $C^*$~-~  and $W^*$~-~algebras, a self-adjoint spectral theory on locally Hilbert spaces can be easily developed (we refer to \cite{Ap.1971, Fr.1988, Sc.1975}).
This paper is intended to be an introduction to the non-selfadjoint spectral theory in this frame. More precisely, after completing some results on linear operators between locally Hilbert spaces (adjoint, isometries, partial isometries, contractions, unitary operators), we introduce reproducing kernel locally Hilbert spaces, we give a general dilation theorem for positive definite locally Hilbert space operator valued maps and, as consequences, we obtain dilation results for locally semi-spectral measures, locally ($\rho$~-)~contractions, semi-groups of locally contractions,  as well as extensions for isometries and subnormal operators in the setting of locally Hilbert spaces.

Let us now recall the basic definitions and results regarding a general locally $C^*$~-~algebra, a locally Hilbert space and the associated locally $C^*$~-~algebra of continuous operators on it.

If $A$ is a $*$~-~algebra (over $\mathbb{C}$), then a $C^*$~-~\textit{seminorm} $p$ on $A$ is a seminorm satisfying:
\begin{equation*}
p(a^* a) = p(a)^2 ,\ \ a \in A.
\end{equation*}
It was  proved, first by C. Apostol in \cite{Ap.1971}, that for a complete locally convex $*$~-~algebra $A$ and for a continuous $C^*$~-~seminorm $p$ on $A$, the quotient $*$~-~algebra
$A_p :\ = A/ N(p),\ \ N(p) = \ker p$ is a $C^*$~-~algebra. The set of all such $p$ ' s will be denoted by $S(A)$. Let us also note that such a $C^*$~-~seminorm $p$ satisfies (see \cite{Se.1979})
\begin{equation*}
p(ab) \le p(a) p(b) ,\ \ a, b \in A
\end{equation*}
(i.e. is submultiplicative), and
\begin{equation*}
p(a) = p(a^*) , \ a \in A
\end{equation*}
(i.e. is a $m^*$~-~seminorm).
Complete locally convex $*$~-~algebras endowed with the topology generated by a calibration consisting of all continuous $C^*$~-~seminorms were first studied by C. Apostol \cite{Ap.1971} and A. Inoue \cite{In.1971} in 1971. The latter, as well as M. Fragoulopoulou \cite{Fr.1988} later on, called these objects \emph{locally $C^*$~-~algebras}, whereas other authors \cite{Ap.1971, Sc.1975} called them \emph{$LMC^*$~-~algebras (locally multiplicative $*$~-~algebras)} or even \emph{pro~-~$C^*$~-~algebras} (see \cite{Am.2002, Ma.1986}). We shall adopt here the terminology \emph{$LC^*$~-~algebra} (see also \cite{Zh.Sh.2001}). We shall also suppose that such an $A$ has a unit. Note also that to each $LC^*$~-~algebra $A$ (endowed with the calibration $S(A)$) an inverse system of $C^*$~-~algebras can be attached (for example $\{ A_p , \pi_{p,q} \}_{p \le q}$, where $\pi_{p, q}$ is the natural embedding of $A_p$ into $A_q$), such that $A$ is the inverse (projective) limit of such a system ($A = \varprojlim\limits_{p \in S(A)} A_p$).
In fact inverse limit of any inverse system of $C^*$~-~algebras can stand for defining $LC^*$~-~algebras. Analogously, inverse limit of $W^*$~-~algebras were called \emph{locally $W^*$~-~algebras} or \emph{$LW^*$~-~algebras} (see \cite{Fr.1986, In.1971, Ma.1986, Sc.1975} a.o.). This is why many aspects of the selfadjoint spectral theory can be transposed from $C^*$~-~and $W^*$~-~algebras to $LC^*$~-~and $LW^*$~-~algebras, respectively (see \cite{Ap.1971} for the commutative case and \cite{Fr.1986, Fr.1988, In.1971, Jo.2006, Sc.1975} for the non-commutative one). So for an element $a$ from an $LC^*$~-~algebra $A$ we can define, as usually, its spectra $Sp(a)$ and the following assertions hold (see \cite{Am.2002, Ap.1971, Fr.1986, Fr.1988, In.1971, Ph.1988, Sc.1975}):
\begin{itemize}
\item[(i)] $a$ is (locally) self-adjoint (i.e. $a=a^*$), iff $Sp(a)\subset\mathbb{R}$; we shall denote that
$a\in A_h$;
\item[(ii)] $a$ is (locally) positive (i.e. $a=b^*b$, for some $b\in A$), iff $Sp(a)\subset [0,\infty)$; we denote that by $a\in A_+$; $A_+$ is a closed cone in $A$ and $A_+\cap - A_+=\{0\}$;
\item[(iii)] for a (locally) projection $a$ (i.e. $a=a^2\in A_h$), we denote $a\in\mathcal{P}(A)$; we  easily obtain that $\mathcal{P}(A)\subset A_+\subset A_h$;
\item[(iv)] $a$ is (locally) normal (i.e. $aa^*=a^*a$), we denote briefly  $a\in A_n$; it, evidently, holds $A_h\subset A_n$;
\item[(v)] if $a$ is a (locally) isometry (i.e. $a^*a=e$, where $e$ is the unit element of $A$), then  $aa^*\in\mathcal{P}(A)$ ($aa^*$ being the ``range'' projection);
\item[(vi)] if $a$ and $a^*$ are simultaneously (locally) isometries, then $a$ will be called a (locally) unitary element ($a\in\mathcal{U}(A)$); evidently $Sp(a)\subset\mathbb{T}$, where $\mathbb{T}$ is the torus and $\mathcal{U}(A)\subset A_n$;
\item[(vii)] $a\in A$ is a bounded element in $A$ (i.e. $\|a\|_b:=\sup\{p(a),p\in\mathcal{S}(A)\}<+\infty$, denoted  by $a\in A_b$), iff $Sp(a)$ is bounded in $\mathbb{C}$; evidently $\mathcal{U}(A)\subset A_b$; moreover $A_b$ endowed with the above mentioned sup-norm is a $\mathbb{C}^*$-algebra, which is dense in $A$;
\item[(viii)] any normal element has an integral representation with respect to a spectral measure (locally projection valued) on $\mathbb{C}$ (supported on the spectrum).
\end{itemize}

Now, having in view the above mentioned embedding of any $LC^*$~-~ or $LW^*$~-~algebra in a special $*$~-~algebra of continuous linear operators on a locally Hilbert space and that, in the Hilbert space frame, a non-self-adjoint spectral theory can be developed with the aid of dilation theory, we shall work with operators on locally Hilbert spaces, but also with operators between such spaces. Let's recall some precise definitions.

A \emph{locally Hilbert space} is a strict inductive limit of some ascending family (indexed after a directed set) of Hilbert spaces.
More precisely, given a directed (index) set $\Lambda$ and a family $\{H_\lambda\}_{\lambda\in\Lambda}$ of Hilbert spaces such that
\begin{equation}
\label{eq12} H_\lambda\subset H_\mu \text{ and } \langle\cdot,\cdot\rangle_\lambda=\langle\cdot,\cdot\rangle_\mu \text{ on } H_\lambda,
\end{equation}
(i.e. for $\lambda\leq\mu$, the natural embedding $J_{\lambda\mu}$ of $H_\lambda$ into $H_\mu$ is an isometry), then we endow the linear space $\mathbb{H}=\bigcup\limits_{\lambda\in\Lambda}H_\lambda$ with the inductive limit of $H_\lambda$ ($\lambda\in\Lambda$). Such an $\mathbb{H}$ will be called a \emph{locally Hilbert space} (associated to the ``inductive'' family $\{H_\lambda\}_{\lambda\in\Lambda}$). Recall that the \emph{inductive limit topology} on $\mathbb{H}$ is the finest one, for which the embeddings $J_\lambda$ of $H_\lambda$ into $\mathbb{H}$ are continuous, for all $\lambda\in\Lambda$.

Now we define the associated $LC^*$-algebra. Let $\{T_\lambda\}_{\lambda\in\Lambda}$ be an inductive system of bounded linear operators on $\{H_\lambda\}_{\lambda\in\Lambda}$  (i.e. $T_\lambda\in\mathcal{B}(H_\lambda)$ and
\begin{equation}\label{eq13}
T_\mu J_{\lambda\mu}=J_{\lambda\mu}T_\lambda,
\end{equation}
for each $\lambda,\mu\in\Lambda,$  $\lambda\leq\mu$), such that
\begin{equation}\label{eq14}
T_\mu P_{\lambda\mu}=P_{\lambda\mu}T_\mu,\ \lambda,\mu\in\Lambda,\ \lambda\leq\mu,
\end{equation}
where $P_{\lambda\mu}=J_{\lambda\mu}J_{\lambda\mu}^*$ is the
self-adjoint projection on $H_\mu$ with the range $H_\lambda$. Then
by putting $Th:=T_\lambda h, \, h\in H_\lambda,\,\lambda\in\Lambda$ we
have a correct definition of the linear operator $T$ on $\mathbb{H}$, which is also continuous (relative to the inductive limit topology on $\mathbb{H}$). We use the notation
$T=\varinjlim\limits_\lambda T_\lambda$, and $\mathcal{L}(\mathbb{H})$ for the algebra of all  operators $T$ as above.

Let us also note that a given linear operator $T$ on $\mathbb{H}=\varinjlim\limits_{\lambda\in\Lambda} H_\lambda$ is defined by an inductive system of linear operators  $T=\varinjlim\limits_\lambda T_\lambda$, iff it is invariant to each $H_{\lambda}$, $\lambda\in\Lambda$, i.e. it satisfies $range(TJ_\lambda)\subset H_\lambda$, $\lambda\in\Lambda$, the linear operator $T_\lambda$ from $H_\lambda$ into $H_\lambda$, being given by $T_{\lambda}h:=Th$, $h\in H_\lambda$, $\lambda\in\Lambda$. We also add that, in this case, $T$ is  continuous on $\mathbb{H}$ iff $T_\lambda\in\mathcal{B}(H_\lambda)$, $\lambda\in\Lambda$. Consequently, the following assertion holds:
\begin{itemize}
\item[(ix)] $ \mathcal{L}(\mathbb{H})$ consists exactly of those continuous linear operators on $\mathbb{H}$, which are invariant to each $H_\lambda$, $\lambda\in\Lambda$ and whose ``restrictions'' satisfy \eqref{eq14}.
\end{itemize}
Let us now remark that if $T=\varinjlim\limits_\lambda T_\lambda\in\mathcal{L}(\mathbb{H})$, then  $\{T^*_\lambda\}_{\lambda\in\Lambda}$ is an inductive system of
operators on $\bigcup\limits_\lambda H_\lambda$, satisfying (\ref{eq14}). Indeed,
$T^*_\lambda\in\mathcal{B}(H_\lambda)$,
($\lambda\in\Lambda$) and (\ref{eq14}) is equivalent (by passing to
the adjoint) to
\begin{equation}
\label{eq15} T^*_\mu P_{\lambda\mu}=P_{\lambda\mu}T^*_\mu ,\quad\lambda,\mu\in\Lambda,\, \lambda\leq\mu.
\end{equation}
Now (\ref{eq13}) and (\ref{eq14}) for the system $\{T_\lambda\}_{\lambda\in\Lambda}$ implies (\ref{eq13}) for $\{T^*_\lambda\}_{\lambda\in\Lambda}$ in the following manner.
For an arbitrary $h_\lambda\in H_\lambda$, by applying (\ref{eq15}), it holds that $T^*_\mu h_\lambda=T^*_\mu P_{\lambda\mu}h_\lambda=P_{\lambda\mu}T^*_\mu h_\lambda,\,\lambda\leq\mu$, and hence $T^*_\mu h_\lambda\in H_\lambda$.

By a straightforward computation, we have $(T^*_\mu
h_\lambda-T^*_\lambda h_\lambda,h_\lambda')_\lambda=(h_\lambda,T_\mu
h'_\lambda-T_\lambda h'_\lambda)_\lambda,\, h'_\lambda\in
H_\lambda,$ where, by (\ref{eq13}), the right hand site vanishes.
Now $T^*_\mu h_\lambda-T^*_\lambda h_\lambda=0, h_\lambda\in
H_\lambda$ which means (\ref{eq13}) for
$\{T^*_\lambda\}_{\lambda\in\Lambda}$. In this way we may define
$\varinjlim\limits_{\lambda\in\Lambda} T^*_\lambda:=T^*$  as an operator on $\mathbb{H}$. So $T\to T^*$ is an involution on $\mathcal{L}(\mathbb{H})$. It is now
a simple matter to check that $\mathcal{L}(\mathbb{H})$ is an $LC^*$~-~algebra
with the calibration $\{\|\cdot\|_\lambda\}_{\lambda\in\Lambda}$
defined by
\begin{equation}
\label{eq16} \|T\|_\lambda:=\|T_\lambda\|_{\mathcal{B}(H_\lambda)},\quad\lambda\in\Lambda.
\end{equation}

 \section{The space $\mathcal{L}(\mathbb{H}^1,\mathbb{H}^2)$}

Now, since for the dilation theorems the ``isometric'' embedding of a locally Hilbert space into another one is necessary, we extend the definition of $\mathcal{L}(\mathbb{H})$ to $\mathcal{L}(\mathbb{H}^1,\mathbb{H}^2)$ with two different locally Hilbert spaces $\mathbb{H}^1$ and $\mathbb{H}^2$, define the involution in this case and the locally partial isometries.

Given two families of Hilbert spaces $\{H^1_\lambda\}_{\lambda\in\Lambda}$ and $\{H^2_\lambda\}_{\lambda\in\Lambda}$,
indexed by the same directed set $\Lambda$ and satisfying (each of them) the condition (\ref{eq12}), we denote by $J_{\lambda\mu}^k$ the natural embeddings of $ H_\lambda^k$ into $H_\mu^k$, $\lambda\leq\mu$ and consider  $\mathbb{H}^k=\lim\limits_{\stackrel{\longrightarrow}{\lambda}}H_\lambda^k=\bigcup\limits_{\lambda\in\Lambda}H_\lambda^k,k=1,2$, the corresponding inductive limit.
Take also an inductive system of bounded linear operators $\{T_\lambda\}_{\lambda\in\Lambda}$ from $\bigcup\limits_{\lambda\in\Lambda}H^1_\lambda$ into $\bigcup\limits_{\lambda\in\Lambda}H^2_\lambda$ (i.e. $T_\lambda\in\mathcal{B}(H_\lambda^1,H^2_\lambda), \,\lambda\in\Lambda,$ and
\begin{equation}
 \label{eq21} T_\mu J_{\lambda\mu}^1=J_{\lambda\mu}^2 T_\lambda,
\end{equation}
for each $\lambda\leq\mu,\,\lambda,\mu\in\Lambda$), which also satisfies
\begin{equation}
 \label{eq22} T_\mu P_{\lambda\mu}^1=P_{\lambda\mu}^2 T_\mu,\,\,\lambda,\mu\in\Lambda,\,\lambda\leq\mu,
\end{equation}
where $P_{\lambda\mu}^k=J_{\lambda\mu}^k J_{\lambda\mu}^{k*}$ are self-adjoint projections in $H^k_\mu$, having the range $H^k_\lambda$ ($k=1,2$).

Now (\ref{eq21}) allows us to define correctly the operator $T$ through
\begin{equation}
 \label{eq23} Th=T_\lambda h,\quad h\in H_\lambda^1,\, \lambda\in\Lambda,
\end{equation}
as a linear operator from $\mathbb{H}^1$ into $\mathbb{H}^2$, which is continuous in the inductive limit topology. The class of this operators will be denoted by $\mathcal{L}(\mathbb{H}^1,\mathbb{H}^2)$. This is a complete locally convex space with the calibration consisting of the semi-norms defined as
\begin{equation}
 \label{eq24} \|T\|_\lambda:=\|T_\lambda\|_{\mathcal{B}(H^1_\lambda, H^2_\lambda)},\lambda\in\Lambda, T=\varinjlim\limits_{\lambda\in\Lambda} T_\lambda\in\mathcal{L}(\mathbb{H}^1, \mathbb{H}^2).
\end{equation}
It is clear that $\mathcal{L}(\mathbb{H}, \mathbb{H})=\mathcal{L}(\mathbb{H})$.

Now, returning to an operator $T$ from $\mathcal{L}(\mathbb{H}^1,\mathbb{H}^2)$, the relation (\ref{eq22}) for the inductive system $\{T_\lambda\}_{\lambda\in\Lambda}$ is equivalent (by passing to the hilbertian adjoint) to
\begin{equation}
 \label{eq25} T^*_\mu P_{\lambda\mu}^2=P_{\lambda\mu}^1T^*_\mu,\quad\lambda,\mu\in\Lambda,\,\lambda\leq\mu,
\end{equation}
which as in the first section and by (\ref{eq21}) implies
\begin{equation}
 \label{eq26} T^*_\mu J_{\lambda\mu}^2=J_{\lambda\mu}^1T^*_\lambda,\quad\lambda,\mu\in\Lambda,\,\lambda\leq\mu.
\end{equation}
Indeed, since $T_\mu^*\in\mathcal{B}(H^2_\mu,H^1_\mu)$, for an arbitrary $h_\lambda^2\in H_\lambda^2$ it holds
$$T_\mu^* h_\lambda^2=T_\mu^* P^2_{\lambda\mu} h_\lambda^2=P^1_{\lambda\mu} T_\mu^* h_\lambda^2,$$
hence $T_\mu^* h_\lambda^2\in H_\lambda^1$. Now $T_\mu^* h_\lambda^2-T_\lambda^* h_\lambda^2$ satisfies for arbitrary $h1_\lambda \in H^1_\Lambda$:
$$(T_\mu^* h_\lambda^2-T_\lambda^* h_\lambda^2,h_\lambda^1)=(h_\lambda^2,T_\mu h_\lambda^1-T_\lambda h^1_\lambda),$$
which, by (\ref{eq21}), vanishes. Since  $h_\lambda^1\in H_\lambda^1$ is arbitrary, it results $T_\mu^* h_\lambda^2-T_\lambda^* h_\lambda^2=0.$ Because $h_\lambda^2$ is arbitrary in $H_\lambda^2$, relation (\ref{eq26}) holds.\\
Defining
\begin{equation}
 \label{eq27}
 T^*:=\varinjlim\limits_{\lambda\in\Lambda} T_\lambda^*,
\end{equation}
we obtain $T^*\in\mathcal{L}(\mathbb{H}^2,\mathbb{H}^1)$ and, finally, the mapping
\begin{equation}
 \label{eq28}\mathcal{L}(\mathbb{H}^1,\mathbb{H}^2)\ni T\longmapsto T^*\in \mathcal{L}(\mathbb{H}^2,\mathbb{H}^1)
\end{equation}
satisfies the properties of the adjunction (as in the case of Hilbert space operators from one space to another).
 For the adjunction of a product let us observe that if we have three locally Hilbert spaces
 $\mathbb{H}^k=\varinjlim\limits_{\lambda}H_\lambda^k=\bigcup\limits_{\lambda\in\Lambda}H_\lambda^k$
 ($k=1,2,3$), and $T=\varinjlim\limits_\lambda T_\lambda\in\mathcal{L}(\mathbb{H}^1,\mathbb{H}^2)$,
 $S=\varinjlim\limits_\lambda S_\lambda\in\mathcal{L}(\mathbb{H}^2,\mathbb{H}^3)$,  by (\ref{eq21}), for
 $T$ and $S$ we successively have $$S_\mu T_\mu J^1_{\lambda\mu}=S_\mu J^2_{\lambda\mu} T_\lambda=J^3_{\lambda\mu}S_\mu T_\mu,\,\lambda,\mu\in\Lambda,\,\lambda\leq\mu,$$
from where $\{S_\lambda T_\lambda\}_{\lambda\in\Lambda}$ satisfy (\ref{eq21}) as operators from
 $\mathcal{B}(H_\lambda^1,H_\lambda^3)$. Analogously from (\ref{eq22}) for $T$ and $S$ we infer (\ref{eq22}) for $ST$.
 Consequently $ST$ defined by $\varinjlim\limits_\lambda S_\lambda T_\lambda$ belongs to
 $\mathcal{L}(\mathbb{H}^1,\mathbb{H}^3)$, and is in fact the composition operator of $T$ and $S$.

Let us also observe that the corresponding semi-norms from $\mathcal{L}(\mathbb{H}^1,\mathbb{H}^3)$ satisfy
\begin{equation}
 \label{eq29} \|ST\|_\lambda\leq\|S\|_\lambda\|T\|_\lambda,T\in\mathcal{L}(\mathbb{H}^1,\mathbb{H}^2),S\in\mathcal{L}(\mathbb{H}^2,\mathbb{H}^3),\lambda\in\Lambda.
\end{equation}

In a similar way it is possible to define the composition $T^*S^*$ as a member of $\mathcal{L}(\mathbb{H}^3,\mathbb{H}^1)$. Regarding  both constructions, applying the adjunction of a product of Hilbert space operators, we get
\begin{equation}
  \label{eq210} T^*S^*=\varinjlim\limits_{\lambda\in\Lambda} T_\lambda^* S^*_\lambda =  \varinjlim\limits_{\lambda\in\Lambda} (S_\lambda  T_\lambda )^*= (ST)^*.
\end{equation}

Noticing that for $T\in\mathcal{L}(\mathbb{H}^1,\mathbb{H}^2)$ it is possible to form $T^*T\in\mathcal{L}(\mathbb{H}^1)$ and $TT^*\in\mathcal{L}(\mathbb{H}^2)$. These are clearly self-adjoint elements in the corresponding $LC^*$-algebras, the semi-norms $\|\cdot\|_\lambda$ satisfying the property $$\|T^*T\|_\lambda=\|T\|^2_\lambda=\|T^*\|^2_\lambda,\,\lambda\in\Lambda$$ (where the semi-norms are in $\mathcal{L}(\mathbb{H}^1)$, $\mathcal{L}(\mathbb{H}^1,\mathbb{H}^2)$ and $\mathcal{L}(\mathbb{H}^2,\mathbb{H}^1)$, respectively).

Having in view the above construction, the following characterizations of special elements in $\mathcal{L}(\mathbb{H})$ are immediate and the proof will be omitted.
\begin{prop}\label{prop21}
Let $\mathbb{H}=\varinjlim\limits_{\lambda\in\Lambda}H_\lambda$ be a locally Hilbert space and $T=\varinjlim\limits_{\lambda}T_\lambda$ be an element in $\mathcal{L}(\mathbb{H})$.
Then
  \begin{itemize}
  \item[(i)] $T$ is  locally self-adjoint  on $\mathbb{H}$, iff each $T_\lambda$ is self-adjoint on $H_\lambda$  $(\lambda\in \Lambda)$;
  \item[(ii)] $T$ is  locally positive  on $\mathbb{H}$, iff each $T_\lambda$ is positive on $H_\lambda$ $(\lambda\in \Lambda)$;
  \item[(iii)] $T$ is a locally projection  on $\mathbb{H}$, iff each $T_\lambda$ is a projection on $H_\lambda$ $(\lambda\in \Lambda)$;
  \item[(iv)] $T$ is locally normal  on $\mathbb{H}$, iff each $T_\lambda$ is normal on $H_\lambda$  $(\lambda\in \Lambda)$;
  \item[(v)] $T$ is a local isometry on $\mathbb{H}$ (i.e. $T^*T=I_\mathbb{H}$), iff each $T_\lambda$ is an isometry on $H_\lambda$ $(\lambda\in \Lambda)$.
  \end{itemize}
\end{prop}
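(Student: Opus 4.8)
The plan is to push every one of the five equivalences down to the componentwise level. First I would record three facts already available from the construction of $\mathcal{L}(\mathbb{H})$: the product and the involution are formed levelwise, $ST=\varinjlim_{\lambda}S_\lambda T_\lambda$ and $T^*=\varinjlim_{\lambda}T_\lambda^*$; the unit is $I_{\mathbb{H}}=\varinjlim_{\lambda}I_{H_\lambda}$; and, by \eqref{eq16}, an element $S=\varinjlim_{\lambda}S_\lambda$ of $\mathcal{L}(\mathbb{H})$ is the zero operator exactly when $S_\lambda=0$ in $\mathcal{B}(H_\lambda)$ for every $\lambda$, since the calibration $\{\|\cdot\|_\lambda\}$ separates points. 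Consequently, for any noncommutative $*$-polynomial $w$ in two variables with complex coefficients, the identity $w(T,T^*)=0$ holds in $\mathcal{L}(\mathbb{H})$ if and only if $w(T_\lambda,T_\lambda^*)=0$ holds in $\mathcal{B}(H_\lambda)$ for every $\lambda\in\Lambda$. Taking $w(x,y)=x-y$ yields (i); the conjunction of $x-x^2=0$ and $x-y=0$ yields (iii); $xy-yx=0$ yields (iv); and $yx-1=0$ (i.e. $T^*T=I_{\mathbb{H}}$) yields (v); in each instance the componentwise condition says precisely that every $T_\lambda$ is self-adjoint, a projection, normal, or an isometry on $H_\lambda$, respectively.

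For (ii) the forward implication is of the same flavour: local positivity of $T$ means $T=B^*B$ for some $B=\varinjlim_{\lambda}B_\lambda\in\mathcal{L}(\mathbb{H})$, whence $T_\lambda=B_\lambda^*B_\lambda\geq 0$ for every $\lambda$. For the converse I would build a positive square root levelwise. Assume each $T_\lambda\geq 0$ and let $T_\lambda^{1/2}\in\mathcal{B}(H_\lambda)$ be its unique positive square root; the goal is to show that $\{T_\lambda^{1/2}\}_{\lambda}$ is again an inductive system satisfying \eqref{eq13}--\eqref{eq14}. Relation \eqref{eq14} for $T_\mu$ passes, by uniform approximation of $t\mapsto\sqrt{t}$ by polynomials on the compact set $Sp(T_\mu)\subset[0,\|T_\mu\|]$, to $T_\mu^{1/2}$, which is \eqref{eq14} for the new system; this commutation also makes $H_\lambda$ invariant under $T_\mu^{1/2}$, and since $T_\mu|_{H_\lambda}=T_\lambda$ by \eqref{eq13}, the restriction $T_\mu^{1/2}|_{H_\lambda}$ is a positive operator on $H_\lambda$ whose square is $T_\lambda$, so by uniqueness $T_\mu^{1/2}|_{H_\lambda}=T_\lambda^{1/2}$ --- which is \eqref{eq13} for the new system. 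Hence $B:=\varinjlim_{\lambda}T_\lambda^{1/2}\in\mathcal{L}(\mathbb{H})$ is well defined, it is locally self-adjoint by part (i), and $B^*B=B^2=\varinjlim_{\lambda}T_\lambda=T$, so $T$ is locally positive. (Alternatively one could invoke the spectral characterizations (i)--(viii) of the Introduction together with the fact that the canonical image of $T$ in the quotient $C^*$-algebra $\mathcal{L}(\mathbb{H})_{\|\cdot\|_\lambda}$ is $T_\lambda$, so that $T$ is positive iff each $T_\lambda$ is.)

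The genuinely routine parts are the polynomial identities of the first paragraph. The one delicate point, where I would spend care, is the converse of (ii): one must check that the levelwise positive square roots are mutually compatible, i.e. that the extra commutation relation \eqref{eq14} with the projections $P_{\lambda\mu}$ --- exactly what upgrades a compatible family of operators to an inductive system --- survives the continuous functional calculus, and that the restriction of $T_\mu^{1/2}$ to the invariant subspace $H_\lambda$ is really the positive square root $T_\lambda^{1/2}$ and not some other square root. Once this is granted, all five equivalences follow, which is presumably why the proposition is stated without proof.
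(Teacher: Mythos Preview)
Your argument is correct. The paper itself gives no proof of this proposition --- it simply declares the characterizations ``immediate'' and omits the proof --- so there is nothing to compare strategies against. Your reduction of (i), (iii), (iv), (v) to componentwise $*$-polynomial identities via the levelwise description of products, adjoints, the unit, and the separating calibration \eqref{eq16} is exactly the kind of routine verification the authors have in mind. Your handling of the converse in (ii), building the positive square root levelwise and checking that \eqref{eq13}--\eqref{eq14} survive the continuous functional calculus (using uniqueness of the positive square root on each $H_\lambda$), is the only place requiring any care, and you treat it correctly; your parenthetical alternative via the spectral characterization of positivity in the Introduction is in fact the shorter route and is presumably what the authors regard as making (ii) ``immediate'' as well.
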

Now, it is possible to define a locally partial isometry between two locally Hilbert spaces. Namely $V$ is a \emph{locally partial isometry}, when it is an operator acting between two locally Hilbert spaces $\mathbb{H}^1$ and $\mathbb{H}^2$, $V\in\mathcal{L}(\mathbb{H}^1,\mathbb{H}^2)$ and $V^*V$ is a locally projection on $\mathbb{H}^1$ (i.e. $V^*V\in\mathcal{P}(\mathcal{L}(\mathbb{H}^1))$).

Let us note that, as in the Hilbert space case, if $V\in\mathcal{L}(\mathbb{H}^1,\mathbb{H}^2)$ is a locally partial isometry, then $VV^*$ is a locally projection (on $\mathbb{H}^2$) as well. If $V^*V=1_{\mathbb{H}^1}$, then $V$ will be called a \emph{locally isometry} (from $\mathbb{H}^1$ to $\mathbb{H}^2$). In the case in which $VV^*=1_{\mathbb{H}^2}$,  $V$ will be called a \emph{locally co-isometry}. A locally isometry, which is also a locally co-isometry is a \emph{locally unitary operator} from $\mathbb{H}^1$ into $\mathbb{H}^2$.

The following characterizations are also easy to prove:

\begin{thm}\label{thm22}
Let $V=\varinjlim\limits_\lambda V_\lambda$ be an element from $\mathcal{L}(\mathbb{H}^1,\mathbb{H}^2)$. Then
  \begin{itemize}
  \item[(i)] $V$ is a locally partial isometry, iff each $V_\lambda$ is a partial isometry from $H^1_\lambda$ into $H^2_\lambda$ ($\lambda\in\Lambda$);
  \item[(ii)] $V$ is a locally co-isometry, iff each $V_\lambda$ is a co-isometry from $H^1_\lambda$ into $H^2_\lambda$ ($\lambda\in\Lambda$);
  \item[(iii)] $V$ is an invertible operator, iff each $V_\lambda$ is invertible ($\lambda\in\Lambda$). In this case $V^{-1}=\varinjlim\limits_{\lambda\in\Lambda}V_\lambda^{-1}\in\mathcal{L}(\mathbb{H}^2,\mathbb{H}^1)$;
  \item[(iv)] $V$ is a locally unitary operator from $\mathbb{H}^1$ onto $\mathbb{H}^2$, iff each $V_\lambda$ is a unitary operator from $H_\lambda^1$ onto $H^2_\lambda$ ($\lambda\in\Lambda$);
  \item[(v)] \emph{(Fuglede-Putnam Theorem)} Let $N_1\in\mathcal{L}_n(\mathbb{H}^1)$ (locally normal operator) and $N_2\in\mathcal{L}_n(\mathbb{H}^2)$. If there exists  $S\in\mathcal{L}(\mathbb{H}^1,\mathbb{H}^2)$ such that $SN_1=N_2S$, then $SN_1^*=N^*_2S$.
  \end{itemize}
\end{thm}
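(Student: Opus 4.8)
The plan is to reduce every assertion to the corresponding classical statement about the Hilbert-space components $V_\lambda\in\mathcal{B}(H^1_\lambda,H^2_\lambda)$, using three structural facts already established in the excerpt: products in $\mathcal{L}(\cdot,\cdot)$ are formed componentwise, so $(ST)_\lambda=S_\lambda T_\lambda$; the involution acts componentwise, so $(T^*)_\lambda=T_\lambda^*$; and an element of $\mathcal{L}(\cdot,\cdot)$ is determined by its components, i.e.\ $S=T$ iff $S_\lambda=T_\lambda$ for every $\lambda\in\Lambda$. Combined with Proposition~\ref{prop21}, which characterizes local self-adjointness, positivity, projections, normality and isometries componentwise (and which is itself proved by exactly this philosophy), parts (i), (ii), (iv) and (v) become immediate consequences of the Hilbert-space theory.

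For (i): since $(V^*V)_\lambda=V_\lambda^*V_\lambda$, Proposition~\ref{prop21}(iii) says $V^*V$ is a locally projection on $\mathbb{H}^1$ iff each $V_\lambda^*V_\lambda$ is a projection, which in a Hilbert space is equivalent to $V_\lambda$ being a partial isometry. For (ii): $VV^*=1_{\mathbb{H}^2}$ iff $(VV^*)_\lambda=V_\lambda V_\lambda^*=I_{H^2_\lambda}$ for all $\lambda$, i.e.\ each $V_\lambda$ is a co-isometry (and then $V^*V$ is automatically a locally projection, so $V$ is indeed a locally partial isometry). For (iv): a locally unitary operator is by definition both a locally isometry and a locally co-isometry, so one combines Proposition~\ref{prop21}(v) with part (ii); on components this is just ``$V_\lambda$ is an isometry and a co-isometry, hence unitary''.

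Part (iii) is the only one needing a short computation. If $V$ is invertible with inverse $W\in\mathcal{L}(\mathbb{H}^2,\mathbb{H}^1)$, then componentwise $W_\lambda V_\lambda=I_{H^1_\lambda}$ and $V_\lambda W_\lambda=I_{H^2_\lambda}$, so each $V_\lambda$ is invertible with $V_\lambda^{-1}=W_\lambda$. Conversely, suppose each $V_\lambda$ is invertible; one must check that $\{V_\lambda^{-1}\}_{\lambda\in\Lambda}$ is again an inductive system from $\{H^2_\lambda\}$ to $\{H^1_\lambda\}$ satisfying \eqref{eq21} and \eqref{eq22}. This follows by ``inverting'' those relations: from $V_\mu J^1_{\lambda\mu}=J^2_{\lambda\mu}V_\lambda$ one gets $J^1_{\lambda\mu}V_\lambda^{-1}=V_\mu^{-1}J^2_{\lambda\mu}$ after multiplying by $V_\mu^{-1}$ on the left and $V_\lambda^{-1}$ on the right, and from $V_\mu P^1_{\lambda\mu}=P^2_{\lambda\mu}V_\mu$, multiplying by $V_\mu^{-1}$ on both sides gives $P^1_{\lambda\mu}V_\mu^{-1}=V_\mu^{-1}P^2_{\lambda\mu}$. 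Hence $\varinjlim_\lambda V_\lambda^{-1}$ defines an element of $\mathcal{L}(\mathbb{H}^2,\mathbb{H}^1)$, and since products are computed componentwise it is a two-sided inverse of $V$.

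Finally, for (v) write $N_1=\varinjlim_\lambda N_{1,\lambda}$, $N_2=\varinjlim_\lambda N_{2,\lambda}$ and $S=\varinjlim_\lambda S_\lambda$; by Proposition~\ref{prop21}(iv) each $N_{1,\lambda}\in\mathcal{B}(H^1_\lambda)$ and $N_{2,\lambda}\in\mathcal{B}(H^2_\lambda)$ is normal. Reading $SN_1=N_2S$ componentwise yields $S_\lambda N_{1,\lambda}=N_{2,\lambda}S_\lambda$ for every $\lambda$, and the classical Fuglede--Putnam theorem on the Hilbert spaces $H^1_\lambda,H^2_\lambda$ gives $S_\lambda N_{1,\lambda}^*=N_{2,\lambda}^*S_\lambda$. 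Since $(SN_1^*)_\lambda=S_\lambda N_{1,\lambda}^*$ and $(N_2^*S)_\lambda=N_{2,\lambda}^*S_\lambda$, and equality of all components forces equality in $\mathcal{L}(\mathbb{H}^1,\mathbb{H}^2)$, we conclude $SN_1^*=N_2^*S$. The only genuine input is the classical Fuglede--Putnam theorem applied levelwise; there is no real obstacle beyond the routine bookkeeping of components, the sole place where a verification is actually required being the compatibility check for $\{V_\lambda^{-1}\}$ in (iii).
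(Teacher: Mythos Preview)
Your proof is correct and follows precisely the componentwise reduction that the paper has in mind; in fact the paper omits the proof entirely, merely remarking that the characterizations ``are also easy to prove,'' and your argument supplies the routine verification in the spirit already used for Proposition~\ref{prop21}. The only place requiring any genuine work, as you note, is the compatibility check for $\{V_\lambda^{-1}\}$ in (iii), which you carry out correctly.
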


Now it is  interesting to observe that the notion of orthogonally  closed subspace has a correspondent in the frame of locally Hilbert spaces. Indeed, we can give the following definition:
\begin{defn}\label{def21}
 A subspace $\mathbb{H}^1$ of a locally Hilbert space $\mathbb{H}$ is \emph{orthogonally complementable}, if there is a locally self-adjoint projection $P \in \mathcal{L}(\mathbb{H})$, such that $P\mathbb{H} = \mathbb{H}^1$.
\end{defn}
It is clear that any such ``orthogonally'' complementable  subspace is closed. For now we are not interested in the problem whether each closed subspace $\mathbb{H}^1$ is orthogonally complementable. However it is interesting to see that each $H_{\lambda_0} \ (\lambda_0 \in \Lambda)$ is orthogonally  complementable in $\mathbb{H} = \varinjlim\limits_{\lambda \in \Lambda} H_\lambda$. This is easily seen if we regard
$H_{\lambda_0}$ as the strict inductive limit of $H_{\lambda_0} \cap H_{\lambda}, \ \lambda \in \Lambda$, i.e. $H_{\lambda_0} = \varinjlim\limits_{\lambda \in \Lambda} H_{\lambda_0} \cap H_\lambda$.
Is is easy to obtain that the family $\{ H_{\lambda_0} \cap H_\lambda , \ \lambda\in\Lambda \}$ satisfies the condition \eqref{eq12} and $H_{\lambda_0} = \bigcup\limits_{\lambda\in\Lambda} (H_{\lambda_0} \cap H_\lambda)$. Moreover,  the natural embedding $J_{\lambda_0}$ of $H_{\lambda_0}$ into $\mathbb{H}$ satisfies the conditions \eqref{eq21} and \eqref{eq22}, if we consider $J_{\lambda_0} =
\varinjlim\limits_{\lambda \in \Lambda} J_{\lambda_0}^\lambda$, where $J_{\lambda_0}^\lambda$ is the natural embedding of $H_{\lambda_0} \cap H_\lambda$ into $H_\lambda$, $\lambda \in \Lambda$.
So $J_{\lambda_0}$ is a locally isometric operator from $\mathcal{L}(H_{\lambda_0}, \mathbb{H})$. In this way $J_{\lambda_0}J^*_{\lambda_0} \in \mathcal{L} (\mathbb{H})$ and $J_{\lambda_0}J^*_{\lambda_0} \mathbb{H} = H_{\lambda_0}$,
$J_{\lambda_0}J^*_{\lambda_0}$ being the desired locally  self-adjoint projection. So we have proved:
\begin{prop}\label{prop23}
If $\ \mathbb{H}= \varinjlim\limits_{\lambda \in \Lambda} H_\lambda$ is a locally Hilbert space, then $J_\lambda$ is a locally isometry from $H_\lambda$ into $\mathbb{H}$ and each $H_\lambda$, $\lambda\in\Lambda$ is an orthogonally complementable subspace in $\mathbb{H}$. Moreover, if $\,T\in\mathcal{L}(\mathbb{H})$, then $T= \varinjlim\limits_{\lambda \in \Lambda} T_\lambda$, where $T_\lambda=J_\lambda^*TJ_\lambda$ and $TJ_\lambda=J_\lambda J_\lambda^*TJ_\lambda=J_\lambda T_\lambda,\ \lambda\in\Lambda$.
\end{prop}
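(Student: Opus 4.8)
The plan is to verify each of the five assertions in Proposition~\ref{prop23} by reducing everything to the already-established facts about inductive systems and to ordinary Hilbert space operator theory, exploiting the preceding discussion of $H_{\lambda_0}$ as the strict inductive limit $\varinjlim_{\lambda}(H_{\lambda_0}\cap H_\lambda)$.

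First I would settle that $J_\lambda$ is a locally isometry from $H_\lambda$ into $\mathbb{H}$. This is precisely the content of the paragraph immediately preceding the statement: writing $J_{\lambda_0}=\varinjlim_{\lambda} J_{\lambda_0}^\lambda$ where $J_{\lambda_0}^\lambda$ embeds $H_{\lambda_0}\cap H_\lambda$ into $H_\lambda$, one checks \eqref{eq21} and \eqref{eq22} for this system, so $J_{\lambda_0}\in\mathcal{L}(H_{\lambda_0},\mathbb{H})$, and since each $J_{\lambda_0}^\lambda$ is an isometry, Proposition~\ref{prop21}(v) (or rather its two-space analogue, the ``locally isometry'' clause of Theorem~\ref{thm22}) gives $J_{\lambda_0}^*J_{\lambda_0}=1_{H_{\lambda_0}}$. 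Hence $P_{\lambda_0}:=J_{\lambda_0}J_{\lambda_0}^*\in\mathcal{L}(\mathbb{H})$ is a locally self-adjoint projection with range $H_{\lambda_0}$, which is exactly what Definition~\ref{def21} demands, so each $H_\lambda$ is orthogonally complementable.

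Next comes the formula $T=\varinjlim_\lambda T_\lambda$ with $T_\lambda=J_\lambda^*TJ_\lambda$. Here I would argue fibrewise: for $T\in\mathcal{L}(\mathbb{H})$ we already know from assertion (ix) that $T$ is invariant to each $H_\lambda$, i.e. $\mathrm{range}(TJ_\lambda)\subset H_\lambda$, and that the defining restriction $\widetilde T_\lambda\in\mathcal{B}(H_\lambda)$ given by $\widetilde T_\lambda h=Th$ satisfies $T=\varinjlim_\lambda \widetilde T_\lambda$. It then suffices to identify $\widetilde T_\lambda$ with $J_\lambda^*TJ_\lambda$. Since $\mathrm{range}(TJ_\lambda)\subset H_\lambda$, for $h\in H_\lambda$ the vector $TJ_\lambda h=T h\in H_\lambda$, and applying $J_\lambda J_\lambda^*$ (the orthogonal projection of $\mathbb{H}$ onto $H_\lambda$, which fixes $H_\lambda$ pointwise) leaves it unchanged; thus $J_\lambda J_\lambda^* T J_\lambda h = TJ_\lambda h$, i.e. $TJ_\lambda=J_\lambda(J_\lambda^*TJ_\lambda)=J_\lambda T_\lambda$, and composing on the left with $J_\lambda^*$ (using $J_\lambda^*J_\lambda=1$) gives $T_\lambda h=J_\lambda^*TJ_\lambda h=J_\lambda^* TJ_\lambda h=\widetilde T_\lambda h$. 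Hence $T_\lambda=\widetilde T_\lambda$, the chain $TJ_\lambda=J_\lambda J_\lambda^* TJ_\lambda=J_\lambda T_\lambda$ is established, and $T=\varinjlim_\lambda T_\lambda$ follows.

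The only genuinely delicate point — and the step I expect to be the main obstacle — is the verification, in the first part, that the family $\{H_{\lambda_0}\cap H_\lambda\}_{\lambda\in\Lambda}$ really is an inductive family (condition \eqref{eq12}) with union $H_{\lambda_0}$ and that the embeddings $J_{\lambda_0}^\lambda$ assemble into an inductive system satisfying \eqref{eq21}--\eqref{eq22}; one must be a little careful because $\lambda\mapsto H_{\lambda_0}\cap H_\lambda$ need not be order-preserving into $H_{\lambda_0}$ in an obvious way unless one notes that for $\lambda\le\mu$ one has $H_{\lambda_0}\cap H_\lambda\subset H_{\lambda_0}\cap H_\mu$ with matching inner products (inherited from $H_{\lambda_0}$, equivalently from $H_\mu$ via \eqref{eq12}), and that $\bigcup_\lambda(H_{\lambda_0}\cap H_\lambda)=H_{\lambda_0}\cap\bigcup_\lambda H_\lambda=H_{\lambda_0}$. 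Once that bookkeeping is in place, the compatibility relations \eqref{eq14}/\eqref{eq22} for the projections $P_{\lambda\mu}$ restricted appropriately are routine, and everything else in the proposition is a direct consequence. Accordingly I would present the proof by: (1) citing the preceding paragraph for ``$J_\lambda$ locally isometry'' and ``$H_\lambda$ orthogonally complementable''; (2) deriving $T_\lambda=J_\lambda^*TJ_\lambda$ and the identity $TJ_\lambda=J_\lambda T_\lambda$ from assertion (ix) as above; and (3) concluding $T=\varinjlim_\lambda T_\lambda$.
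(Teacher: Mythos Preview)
Your proposal is correct and mirrors the paper's own argument: the paper establishes the first two assertions in the paragraph immediately preceding the proposition (via the inductive description $H_{\lambda_0}=\varinjlim_\lambda(H_{\lambda_0}\cap H_\lambda)$ and the resulting locally isometric $J_{\lambda_0}$), and then simply records the proposition with ``So we have proved.'' Your additional explicit verification of the ``Moreover'' clause via assertion~(ix) and the projection identity $J_\lambda J_\lambda^*$ is a welcome elaboration of what the paper leaves implicit, but it is the same route, not a different one.
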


 \section{Locally positive definite operator valued kernels}

Let us mention that the first two named authors have introduced in \cite{Ga.Ga.2007} the positive definiteness for $LC^*$~-~algebra valued kernels. Recalling this definition for the $LC^*$~-~algebra $\mathcal{L}(\mathbb{H})$, we shall give a characterization of this positive definiteness in terms of elements of $\mathbb{H}$.
We start with a remark regarding the existence of a natural scalar product on a locally Hilbert space $\mathbb{H}=\varinjlim\limits_{\lambda\in\Lambda}H_\lambda$.
For a pair $h,k\in \mathbb{H}$, we put
\begin{equation}
 \label{eq31}\langle h,k\rangle:=\langle h,k\rangle_\lambda,
\end{equation}
where $\lambda\in\Lambda$ is chosen such that $h,k\in H_\lambda$. From the condition \eqref{eq12} it is easy to see that the definition \eqref{eq31} is correct (does not depend on the choice of $\lambda$) and satisfies the properties of a scalar product.

\begin{defn}[\cite{Ga.Ga.2007}]\label{def31}
Let $\mathbb{H} = \varinjlim\limits_{\lambda\in\Lambda} H_\lambda$ be a locally Hilbert space and $\mathcal{L}(\mathbb{H})$ be the previously defined $LC^*$~-~algebra. An $\mathcal{L}(\mathbb{H})$~-~valued kernel on a set $S$ (i.e. a function $\Gamma:S\times S \to \mathcal{L}(\mathbb{H})$) is a \emph{locally  positive definite kernel} (LPDK), if for each finitely supported function
\begin{equation}\label{eq32}
S\ni s \mapsto T_s = \varinjlim\limits_{\lambda\in\Lambda} T_s^\lambda \in \mathcal{L} (\mathbb{H})
\end{equation}
it holds
\begin{equation} \label{eq33}
\sum\limits_{s, t} T^*_t \Gamma(s, t) T_s \geq 0.
\end{equation}
\end{defn}
Looking at the condition \eqref{eq33} and using the scalar product \eqref{eq31} we shall deduce that it is equivalent to
\begin{equation}\label{eq34}
\sum\limits_{s,t}\langle\Gamma(s,t)h_s,h_t\rangle \geq 0,
\end{equation}
for any finitely supported function $S \ni s \mapsto h_s  \in \mathbb{H}$.  

Indeed, by Proposition \ref{prop21} (iii) and Proposition \ref{prop23}, \eqref{eq33} is equivalent to
\begin{equation}\label{eq3-1}
\sum_{s,t} T^{\lambda *}_t \Gamma^\lambda(s, t) T^\lambda_s\geq 0, \, \lambda\in\Lambda,
\end{equation}
which by the last part of Proposition \ref{prop23} is equivalent to
\begin{equation} \label{eq35}
\sum\limits_{s,t} T_t^{\lambda *} \Gamma^\lambda (s,t) T_s^\lambda \ge 0,\ \lambda \in \Lambda,
\end{equation}
which in turn, by the characterization of operatorial positive definiteness in the Hilbert space $H_\lambda$ is equivalent to
\begin{equation}\label{eq36}
\sum\limits_{s,t} \langle \Gamma^\lambda (s,t),h_s^\lambda, h_t^\lambda \rangle_\lambda \ge 0,\ \lambda \in \Lambda,
\end{equation}
which is obviously equivalent to \eqref{eq34} for each finitely supported ${\mathbb H}$~-~valued function $s\mapsto h_s$. \\
We have thus proven

\begin{thm} \label{thm31}
An $\mathcal{L}(\mathbb{H})$~-~valued kernel $\,\Gamma$ is an LPDK on $S$ iff for each finitely supported $\mathbb{H}$~-~valued function  $s\mapsto h_s$ on $S$, relation \eqref{eq34} is fulfilled.
\end{thm}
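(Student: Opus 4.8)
The plan is to prove the equivalence as a chain of reformulations, pushing the positive-definiteness condition down to each Hilbert space $H_\lambda$, invoking the classical scalar characterization there, and then reassembling over $\Lambda$.

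First I would unwind the definition of LPDK. By \eqref{eq33} we must have, for every finitely supported $\mathcal{L}(\mathbb{H})$-valued function $s\mapsto T_s=\varinjlim_\lambda T_s^\lambda$, that the element $\sum_{s,t}T_t^*\Gamma(s,t)T_s$ of $\mathcal{L}(\mathbb{H})$ is locally positive. By Proposition \ref{prop21}(ii) this holds iff each of its $\lambda$-components is a positive operator on $H_\lambda$. Now the assignment $T\mapsto T_\lambda$ preserves sums, products and adjoints — this follows from the constructions in Section 2 (the composition formula for $\mathcal{L}(\mathbb{H}^1,\mathbb{H}^2)$ and \eqref{eq27}), or directly from Proposition \ref{prop23} via $T_\lambda=J_\lambda^*TJ_\lambda$ and $TJ_\lambda=J_\lambda T_\lambda$ — so the $\lambda$-component of $\sum_{s,t}T_t^*\Gamma(s,t)T_s$ is precisely $\sum_{s,t}T_t^{\lambda*}\Gamma^\lambda(s,t)T_s^\lambda$. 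Hence \eqref{eq33} is equivalent to \eqref{eq35} holding for all $\lambda\in\Lambda$, where $s\mapsto T_s^\lambda$ runs over all finitely supported $\mathcal{B}(H_\lambda)$-valued functions (any such family of components arises from an operator in $\mathcal{L}(\mathbb{H})$ acting trivially beyond $H_\lambda$, and conversely every $T\in\mathcal{L}(\mathbb{H})$ produces one).

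Next I would quote the standard criterion for positive-definiteness of $\mathcal{B}(H_\lambda)$-valued kernels: for a fixed $\lambda$, \eqref{eq35} holds for all finitely supported operator functions iff $\sum_{s,t}\langle\Gamma^\lambda(s,t)h_s^\lambda,h_t^\lambda\rangle_\lambda\ge 0$ for all finitely supported $H_\lambda$-valued functions $s\mapsto h_s^\lambda$, i.e. \eqref{eq36} holds for that $\lambda$.

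It then remains to see that ``\eqref{eq36} for every $\lambda\in\Lambda$'' is the same as \eqref{eq34} for every finitely supported $\mathbb{H}$-valued function. One direction is immediate, since an $H_\lambda$-valued function is in particular $\mathbb{H}$-valued and by \eqref{eq31} the scalar product on $\mathbb{H}$ restricts to $\langle\cdot,\cdot\rangle_\lambda$ on $H_\lambda$, while $\Gamma^\lambda(s,t)h=\Gamma(s,t)h$ for $h\in H_\lambda$. For the converse, a finitely supported $s\mapsto h_s\in\mathbb{H}$ has only finitely many nonzero values, each lying in some member of the family; by directedness of $\Lambda$ there is a single $\mu$ with all $h_s\in H_\mu$, and applying \eqref{eq36} for $\lambda=\mu$ gives \eqref{eq34}. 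Concatenating the three equivalences yields the theorem. I expect the only point requiring real care to be the first step — checking that taking $\lambda$-components commutes with the finite sum of operator products and adjoints, so that Proposition \ref{prop21}(ii) may be applied component-wise; the remaining steps are the classical Hilbert-space lemma and a routine directedness argument.
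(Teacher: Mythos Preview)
Your proposal is correct and follows essentially the same route as the paper: localize the positivity in \eqref{eq33} to each $H_\lambda$ via Proposition~\ref{prop21} and Proposition~\ref{prop23}, invoke the classical scalar characterization of positive definiteness for $\mathcal{B}(H_\lambda)$-valued kernels to pass to \eqref{eq36}, and then use directedness of $\Lambda$ to identify \eqref{eq36} for all $\lambda$ with \eqref{eq34}. You spell out more carefully than the paper does why taking $\lambda$-components commutes with the sums, products and adjoints occurring in \eqref{eq33}, and why every finitely supported $\mathcal{B}(H_\lambda)$-valued test function actually arises as a component of one in $\mathcal{L}(\mathbb{H})$, but these are precisely the points the paper's terse argument leaves implicit.
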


\begin{defn} \label{def32}
Let $S$ be a (commutative) $*$~-~semigroup with a neutral element $e$. An $\mathcal{L}(\mathbb{H})$~-~valued mapping $\varphi$ on $S$ is a \emph{locally positive definite function} (LPDF) on $S$ if the associated kernel
$\Gamma_\varphi$ defined by $\Gamma_\varphi (s, t):\ = \varphi(t^* s),$ $s, t \in S$ is an LPDK.
\end{defn}
From Theorem \ref{thm31} we immediately infer
\begin{cor} \label{cor32}
An $\mathcal{L} (\mathbb{H})$~-~valued function $\varphi$ on a $*$~-~semigroup $S$, where $\mathbb{H} = \varinjlim\limits_{\lambda} H_\lambda$ is a locally Hilbert space, is an LPDF on $S$, iff for each finitely supported function $ S\ni s \mapsto h_s \in \mathbb{H}$ it holds
\begin{equation} \label{eq37}
\sum_{s, t}\langle{\varphi(t^*s)h_s},{h_t} \rangle\geq 0.
\end{equation}
\end{cor}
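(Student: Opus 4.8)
The plan is to deduce the statement directly from Definition~\ref{def32} and Theorem~\ref{thm31}, since the passage from a positive definite function to its associated kernel is purely formal and no new analysis is required.

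First I would invoke Definition~\ref{def32}: by definition, the $\mathcal{L}(\mathbb{H})$-valued function $\varphi$ on $S$ is an LPDF on $S$ if and only if the associated kernel $\Gamma_\varphi$, given by $\Gamma_\varphi(s,t) := \varphi(t^*s)$ for $s,t\in S$ (which is meaningful precisely because $S$ is a $*$-semigroup, so $t^*s\in S$), is an LPDK on $S$. Hence the corollary is proved once this latter property is characterized in terms of elements of $\mathbb{H}$.

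Next I would apply Theorem~\ref{thm31} to the kernel $\Gamma := \Gamma_\varphi$: it asserts that $\Gamma_\varphi$ is an LPDK on $S$ if and only if, for every finitely supported $\mathbb{H}$-valued function $s\mapsto h_s$ on $S$, the inequality $\sum_{s,t}\langle\Gamma_\varphi(s,t)h_s,h_t\rangle\geq 0$ holds, where $\langle\cdot,\cdot\rangle$ is the natural scalar product \eqref{eq31}. Substituting the defining formula $\Gamma_\varphi(s,t)=\varphi(t^*s)$ into this inequality rewrites it as $\sum_{s,t}\langle\varphi(t^*s)h_s,h_t\rangle\geq 0$, which is exactly \eqref{eq37}. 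Chaining the two equivalences yields the assertion.

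There is essentially no obstacle here; the argument is bookkeeping, which is why it was announced in the paper as an immediate consequence. The only points worth checking are that the notion ``finitely supported $\mathbb{H}$-valued function'' appearing in the corollary matches verbatim the hypothesis in Theorem~\ref{thm31}, and that the bracket in \eqref{eq37} is indeed the scalar product \eqref{eq31} — both hold by convention. I would also note in passing that neither the commutativity of $S$ nor the presence of the neutral element $e$ is used in this particular equivalence; they become relevant only for the subsequent dilation constructions.
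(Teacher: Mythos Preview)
Your proposal is correct and follows exactly the approach intended in the paper: the corollary is stated as an immediate consequence of Theorem~\ref{thm31}, and your argument---invoke Definition~\ref{def32} to pass from $\varphi$ to $\Gamma_\varphi$, then apply Theorem~\ref{thm31} and substitute $\Gamma_\varphi(s,t)=\varphi(t^*s)$---is precisely that deduction spelled out. There is nothing to add or correct.
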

If we also look at the ``localization'' of all operators which occur in the above considerations, we easily deduce
\begin{cor} \label{cor33}
Let $\mathbb{H} = \varinjlim\limits_\lambda H_\lambda$ be a locally Hilbert space and $\Gamma: S \times S \to \mathcal{L} (\mathbb{H}),$ $\Gamma(s, t) = \varinjlim\limits_{\lambda \in \Lambda} \Gamma^\lambda (s, t), \ s, t \in S$ be a kernel on the set $S$. The following conditions are equivalent:
\begin{itemize}
\item[(i)]{ $\Gamma$ is an $\mathcal{L}(\mathbb{H})$~-~valued LPDK on $S$;}
\item[(ii)]{ for each $\lambda \in\Lambda$, $\Gamma^\lambda$ is a  $\mathcal{B}(H_\lambda)$~-~valued PDK on $S$;}
\item[(iii)]{for each finitely supported $\mathbb{H}$~-~valued function $s\mapsto h_s$ on $S$,  relation \eqref{eq34} holds.}
\end{itemize}
\end{cor}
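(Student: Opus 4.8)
The plan is to establish (i) $\Leftrightarrow$ (iii) directly from Theorem \ref{thm31} and then close the loop with (iii) $\Rightarrow$ (ii) $\Rightarrow$ (iii), the second pair of implications being essentially the classical Hilbert-space characterization of operator-valued positive definite kernels applied fibrewise.

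First I would invoke Theorem \ref{thm31}, which already yields (i) $\Leftrightarrow$ (iii); so it remains only to connect (iii) with (ii). For that I would record the localization identities coming from Proposition \ref{prop23}: since $\Gamma(s,t)\in\mathcal{L}(\mathbb{H})$ one has $\Gamma(s,t)J_\lambda=J_\lambda\Gamma^\lambda(s,t)$, hence $\Gamma(s,t)h=\Gamma^\lambda(s,t)h$ for $h\in H_\lambda$, and by \eqref{eq31} the scalar product of $\mathbb{H}$ restricts on $H_\lambda$ to $\langle\cdot,\cdot\rangle_\lambda$. Thus, for any finitely supported $H_\lambda$-valued function $s\mapsto h_s$, the left-hand side of \eqref{eq34} equals $\sum_{s,t}\langle\Gamma^\lambda(s,t)h_s,h_t\rangle_\lambda$.

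For (iii) $\Rightarrow$ (ii) I would fix $\lambda\in\Lambda$ and argue that $\Gamma^\lambda$ is a $\mathcal{B}(H_\lambda)$-valued PDK: by the classical equivalence between the operator form $\sum_{s,t}A_t^*K(s,t)A_s\ge 0$ and the vector form $\sum_{s,t}\langle K(s,t)x_s,x_t\rangle\ge 0$ of positive definiteness on a single Hilbert space, it suffices to verify the vector inequality for $\Gamma^\lambda$, and by the identities of the previous paragraph this is precisely \eqref{eq34} applied to $s\mapsto h_s$ viewed as an $\mathbb{H}$-valued function, hence it follows from (iii). For (ii) $\Rightarrow$ (iii) I would take a finitely supported $\mathbb{H}$-valued function $s\mapsto h_s$; its finitely many nonzero values lie in $\bigcup_\lambda H_\lambda$, and since $\Lambda$ is directed they all lie in a single $H_{\lambda_0}$, so \eqref{eq34} reduces to $\sum_{s,t}\langle\Gamma^{\lambda_0}(s,t)h_s,h_t\rangle_{\lambda_0}\ge 0$, which holds because $\Gamma^{\lambda_0}$ is a PDK (the vector inequality being the easy consequence of the operator one).

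I do not expect a genuine obstacle here; the only two points deserving a word of justification are the passage between the operator and vector formulations of positive definiteness on the fixed Hilbert space $H_\lambda$ (classical, obtained using rank-one operators) and the use of the directedness of $\Lambda$ to absorb a finitely supported $\mathbb{H}$-valued function into a single $H_{\lambda_0}$. Everything else is a direct unwinding of Proposition \ref{prop23} together with Theorem \ref{thm31}.
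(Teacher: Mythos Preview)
Your argument is correct and is exactly the unpacking of the paper's own reasoning: the paper does not give a separate proof of this corollary but remarks that it follows by ``localizing'' the chain of equivalences \eqref{eq33}$\Leftrightarrow$\eqref{eq35}$\Leftrightarrow$\eqref{eq36}$\Leftrightarrow$\eqref{eq34} already displayed before Theorem~\ref{thm31}, which is precisely your (i)$\Leftrightarrow$(iii) via Theorem~\ref{thm31} together with the fibrewise Hilbert-space equivalence giving (ii). Your use of Proposition~\ref{prop23} for the identity $\Gamma(s,t)h=\Gamma^\lambda(s,t)h$ on $H_\lambda$ and of the directedness of $\Lambda$ to place a finitely supported $\mathbb{H}$-valued function into a single $H_{\lambda_0}$ are exactly the details the paper leaves implicit.
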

\begin{cor}
For an $\mathcal{L}(\mathbb{H})$~-~valued function on the $*$~-~semigroup $S$, the following conditions are equivalent:
\begin{itemize}
\item[(i)]{$\varphi$ is an $\mathcal{L}(\mathbb{H})$~-~valued LPDF on $S$;}
\item[(ii)]{for each $\lambda\in\Lambda$, $\varphi^\lambda$ is a $\mathcal{B}(H_\lambda)$~-~valued PDF on $S$;}
\item[(iii)]{$\varphi$ satisfies the condition \eqref{eq37}.}
\end{itemize}
\end{cor}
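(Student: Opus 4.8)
The plan is to deduce this corollary directly from Definition~\ref{def32} together with Corollary~\ref{cor33} (and Corollary~\ref{cor32}), specialized to the kernel associated with $\varphi$. Write $\varphi=\varinjlim_\lambda \varphi^\lambda$, so that for each $s\in S$ the operator $\varphi(s)\in\mathcal{L}(\mathbb{H})$ has $\lambda$-component $\varphi^\lambda(s)=J_\lambda^*\varphi(s)J_\lambda\in\mathcal{B}(H_\lambda)$, as in Proposition~\ref{prop23}.

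First I would record the key identification: the kernel $\Gamma_\varphi$ defined by $\Gamma_\varphi(s,t)=\varphi(t^*s)$ satisfies $\Gamma_\varphi(s,t)=\varinjlim_\lambda \Gamma_\varphi^\lambda(s,t)$ with $\Gamma_\varphi^\lambda(s,t)=\varphi^\lambda(t^*s)$; that is, localization commutes with the substitution $(s,t)\mapsto t^*s$, which is immediate since taking the $\lambda$-component of an operator in $\mathcal{L}(\mathbb{H})$ is the map $T\mapsto J_\lambda^*TJ_\lambda$ and this is merely applied here to $\varphi(t^*s)$. Hence, for a fixed $\lambda$, the $\mathcal{B}(H_\lambda)$~-~valued kernel $\Gamma_\varphi^\lambda$ is precisely the kernel associated with the function $\varphi^\lambda$ on $S$, so the assertion ``$\Gamma_\varphi^\lambda$ is a $\mathcal{B}(H_\lambda)$~-~valued PDK on $S$'' is, by the classical definition of a positive definite function on a $*$~-~semigroup, the same as ``$\varphi^\lambda$ is a $\mathcal{B}(H_\lambda)$~-~valued PDF on $S$''.

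With this in hand, the three equivalences follow by applying Corollary~\ref{cor33} to the kernel $\Gamma=\Gamma_\varphi$. Indeed, by Definition~\ref{def32}, condition~(i) here — that $\varphi$ is an LPDF — means by definition that $\Gamma_\varphi$ is an $\mathcal{L}(\mathbb{H})$~-~valued LPDK, i.e.\ condition~(i) of Corollary~\ref{cor33} for $\Gamma_\varphi$. Condition~(ii) of Corollary~\ref{cor33} for $\Gamma_\varphi$ reads ``for each $\lambda$, $\Gamma_\varphi^\lambda$ is a $\mathcal{B}(H_\lambda)$~-~valued PDK'', which by the identification above is exactly condition~(ii) here. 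Finally, condition~(iii) of Corollary~\ref{cor33} for $\Gamma_\varphi$ is that $\sum_{s,t}\langle\Gamma_\varphi(s,t)h_s,h_t\rangle=\sum_{s,t}\langle\varphi(t^*s)h_s,h_t\rangle\geq 0$ for every finitely supported $\mathbb{H}$~-~valued function $s\mapsto h_s$, which is precisely \eqref{eq37}, i.e.\ condition~(iii) here (this last point is also just Corollary~\ref{cor32}). Chaining these identities through Corollary~\ref{cor33} yields the asserted equivalence.

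There is essentially no obstacle: the statement is a bookkeeping consequence of the results already established, and the only point requiring a word of justification is the compatibility of the localization operation with the passage from $\varphi$ to the kernel $\Gamma_\varphi$, which I would dispatch in the single line indicated above. I would therefore keep the write-up to a short paragraph — or, following the authors' practice for analogous statements, simply note that it is immediate from Definition~\ref{def32} and Corollary~\ref{cor33}.
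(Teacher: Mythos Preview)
Your proposal is correct and matches the paper's approach: the paper states this corollary without proof, treating it as an immediate consequence of Definition~\ref{def32} and Corollary~\ref{cor33} applied to $\Gamma_\varphi$, which is exactly the derivation you spell out. Your one-line justification that localization commutes with the substitution $(s,t)\mapsto t^*s$ is the only point worth mentioning, and you handle it correctly.
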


\section{Reproducing kernel locally Hilbert spaces}

In \cite{Ga.Ga.2007} we have defined the reproducing kernel Hilbert module over an $LC^*$~-~algebra $C$. This works for the $LC^*$~-~algebra $\mathcal{L}(\mathbb{H})$ as well. But for  $\mathcal{L}(\mathbb{H})$~-~valued kernels, analogue to the case of Hilbert spaces, it is also possible to introduce the reproducing kernel locally Hilbert space, whose reproducing kernel is $\mathcal{L}(\mathbb{H})$~-~valued.
\begin{defn} \label{def41}
Let $\mathbb{H} = \varinjlim\limits_\lambda H_\lambda$ be a fixed locally Hilbert space, $S$ be an arbitrary set and $\mathbb{K} = \varinjlim\limits_{\lambda \in \Lambda} K_\lambda $ be a locally Hilbert space consisting of $\mathbb{H}$~-~valued functions on $S$. $\mathbb{K}$ is called a \emph{reproducing kernel locally Hilbert space} (RKLHS), if there exists an $\mathcal{L}(\mathbb{H})$~-~valued kernel $\Gamma$ on $S$ such that the operators $\Gamma_s$, $s \in S$, between ${\mathbb H}$ and ${\mathbb K}$, defined by $$(\Gamma_s h)(t) :\ = \Gamma(s, t) h ;\ t \in S ,\ h \in \mathbb{H}$$ satisfy the following conditions:
\begin{itemize}
\item[(IP)]{$\Gamma_s \in \mathcal{L} (\mathbb{H}, \mathbb{K}) ,\ s \in S$ ;}
\item[(RP)]{$k(s) = \Gamma_s^* k ,\ k \in \mathbb{K} , \ s \in S$.}
\end{itemize}
\end{defn}
\begin{rem}
If an $\mathcal{L}(\mathbb{H})$~-~valued kernel on $S$, with the above property exists, then it is uniquely determined by $\mathbb{K}$. Indeed, if another $\mathcal{L}(\mathbb{H})$~-~valued kernel $\Gamma'$ satisfying the properties (IP) and (RP) exists, then (RP) implies $\Gamma_s^* k = {\Gamma_s'}^*k ,\ k \in \mathbb{K}$ wherefrom $\Gamma^*_s = {\Gamma'}^*_s$ as operators from $\mathcal{L}(\mathbb{H} , \mathbb{K})$. This implies now that $\Gamma(s, t) = \Gamma'(s,t);\ s,t \in S$. This is why, $\Gamma$ will be also called \emph{the locally reproducing kernel (LRK) of the RKLHS $\mathbb{K}$}. It will be also denoted by $\Gamma_\mathbb{K}$.
\end{rem}
\begin{rem}
If $\mathbb{H} = \varinjlim\limits_{\lambda\in\Lambda} H_\lambda$ is a locally Hilbert space and $\Gamma$ is an $\mathcal{L} (\mathbb{H})$~-~valued LRK for the locally Hilbert space $\mathbb{K} = \varinjlim\limits_{\lambda\in\Lambda} K_\lambda$, then, having in view that $\Gamma(s, t) = \varinjlim\limits_{\lambda\in\Lambda} \Gamma^\lambda (s,t) ,\ s, t \in S$ (as elements of $\mathcal{L}(\mathbb{H})$ !), the following properties are fulfilled:
\begin{itemize}
\item[(LIP)]{$\Gamma^\lambda_s h \in K_\lambda ,\,h\in H_\lambda,\ s\in S , \ \lambda \in \Lambda$;}
\item[(LRP)]{$\langle{k(s)},{h}\rangle_{H_\lambda} = \langle{k},{\Gamma_s^\lambda h}\rangle_{K_\lambda},\ h \in H_\lambda,\ k \in K_\lambda,\ s\in S , \ \lambda \in \Lambda$.}
\end{itemize}
This results by applying the definition of $\mathcal{L}(\mathbb{H}, \mathbb{K})$.
\end{rem}
It is now easily seen that the locally conditions (LIP) and (LRP) are sufficient to define  $\mathbb{K} = \varinjlim\limits_{\lambda\in\Lambda} K_\lambda$ as RKLHS with $\Gamma=\varinjlim\limits_{\lambda\in\Lambda} \Gamma^\lambda$ as LRK on $S$.

Moreover, we obtain
\begin{cor}\label{cor43}
The locally Hilbert space $\mathbb{K} = \varinjlim\limits_{\lambda\in \Lambda} K_\lambda$ of $\mathbb{H}$~-~valued functions on $S$ is a RKLHS, iff for each $\lambda \in \Lambda$, the Hilbert space $K_\lambda$ of $H_\lambda$~-~valued functions on $S$ is a reproducing kernel Hilbert space (RKHS). Moreover, if $\Gamma_\mathbb{K}$ is the RK of $\mathbb{K}$ and $\Gamma_{K_\lambda}$ is the RK of $K_\lambda$, then, for each
$s, t \in S$, we have
$
\Gamma_\mathbb{K} (s, t) = \varinjlim\limits_{\lambda\in\Lambda} \Gamma_{K_\lambda} (s, t).
$
In other words, $\Gamma_{K_\lambda} = \Gamma_{\mathbb{K}}^\lambda \ (\lambda\in\Lambda)$.
\end{cor}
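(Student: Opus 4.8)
The plan is to prove Corollary \ref{cor43} by reducing the RKLHS property of $\mathbb{K}$ to the RKHS property of the constituent Hilbert spaces $K_\lambda$, using the already established dictionary between inductive limits of operators and operators on locally Hilbert spaces (Proposition \ref{prop23} and the definition of $\mathcal{L}(\mathbb{H}^1,\mathbb{H}^2)$), together with the remark preceding the statement, which records that a RKLHS $\mathbb{K}$ automatically satisfies the localized conditions (LIP) and (LRP), and conversely that (LIP) and (LRP) suffice to produce a RKLHS structure.

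First I would prove the forward implication. Assume $\mathbb{K}=\varinjlim_\lambda K_\lambda$ is a RKLHS with LRK $\Gamma$. By the second remark above, writing $\Gamma(s,t)=\varinjlim_\lambda\Gamma^\lambda(s,t)$, the operators $\Gamma_s^\lambda$ defined by $(\Gamma_s^\lambda h)(t)=\Gamma^\lambda(s,t)h$ satisfy (LIP) and (LRP). But (LIP) says precisely that, for each fixed $\lambda$, the map $h\mapsto\Gamma_s^\lambda h$ sends $H_\lambda$ into $K_\lambda$ and, being the restriction of $\Gamma_s\in\mathcal{L}(\mathbb{H},\mathbb{K})$, is bounded with bounded adjoint; (LRP) is exactly the reproducing identity $\langle k(s),h\rangle_{H_\lambda}=\langle k,\Gamma_s^\lambda h\rangle_{K_\lambda}$ for the Hilbert space $K_\lambda$ of $H_\lambda$-valued functions on $S$. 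Hence for each $\lambda$ the kernel $(s,t)\mapsto\Gamma^\lambda(s,t)\in\mathcal{B}(H_\lambda)$ realizes $K_\lambda$ as an (operator-valued) RKHS in the classical sense, so $K_\lambda$ is a RKHS with $\Gamma_{K_\lambda}=\Gamma^\lambda_{\mathbb{K}}$.

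For the converse, suppose each $K_\lambda$ is a RKHS with reproducing kernel $\Gamma_{K_\lambda}$. Set $\Gamma^\lambda(s,t):=\Gamma_{K_\lambda}(s,t)\in\mathcal{B}(H_\lambda)$. The key point to check is that $\{\Gamma^\lambda(s,t)\}_{\lambda\in\Lambda}$ forms an inductive system for each fixed $(s,t)$, i.e.\ it satisfies the compatibility conditions \eqref{eq13}, \eqref{eq14} analogous to those in Section 1; equivalently, $\Gamma^\lambda(s,t)$ is the compression $J^*_{\lambda}\Gamma^\mu(s,t)J_{\lambda}$ (suitably interpreted for $H_\lambda\subset H_\mu$) and leaves $H_\lambda$ invariant inside $H_\mu$. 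This I would deduce from the \emph{uniqueness} of reproducing kernels: because $K_\lambda\subset K_\mu$ isometrically and the point-evaluation on $K_\mu$ restricts to the point-evaluation on $K_\lambda$, the restriction to $H_\lambda$-valued functions of the $K_\mu$-reproducing structure must coincide with the $K_\lambda$-reproducing structure, forcing the compatibility of the $\Gamma^\lambda$'s; this is where the condition \eqref{eq12} (isometric embeddings, consistent inner products) is used essentially. Once this is in hand, $\Gamma(s,t):=\varinjlim_\lambda\Gamma^\lambda(s,t)$ is a well-defined element of $\mathcal{L}(\mathbb{H})$, the induced $\Gamma_s^\lambda$ satisfy (LIP) and (LRP), and by the observation just before the corollary these suffice to make $\mathbb{K}=\varinjlim_\lambda K_\lambda$ a RKLHS with LRK $\Gamma$. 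Comparing, $\Gamma_{K_\lambda}=\Gamma^\lambda_{\mathbb{K}}$ and $\Gamma_{\mathbb{K}}(s,t)=\varinjlim_\lambda\Gamma_{K_\lambda}(s,t)$, as claimed.

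The main obstacle I anticipate is the converse direction, specifically verifying the inductive-system compatibility \eqref{eq13}--\eqref{eq14} for the family $\{\Gamma_{K_\lambda}(s,t)\}_\lambda$ — one must make sure that the $K_\lambda$'s being RKHS's "on their own" genuinely glue together, rather than merely sitting inside $\mathbb{K}$ as an increasing union of vector spaces. The remaining steps (continuity of $\Gamma_s$, the reproducing identity, uniqueness of $\Gamma$) are routine, as they are inherited levelwise from the Hilbert space theory and then transported via Proposition \ref{prop23} and the construction of $\mathcal{L}(\mathbb{H},\mathbb{K})$ in Section 2.
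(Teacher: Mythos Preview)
Your proposal is correct and follows the same line the paper intends: the paper gives no separate proof of Corollary~\ref{cor43}, treating it as an immediate consequence of the two remarks preceding it (that a RKLHS satisfies (LIP) and (LRP), and conversely that (LIP) and (LRP) suffice to define a RKLHS). Your argument unpacks exactly this, and you correctly isolate the only nontrivial point---the inductive compatibility \eqref{eq13}--\eqref{eq14} of the family $\{\Gamma_{K_\lambda}(s,t)\}_\lambda$ in the converse direction---which the paper passes over in silence; your resolution via uniqueness of reproducing kernels together with the isometric inclusions $K_\lambda\hookrightarrow K_\mu$ and $H_\lambda\hookrightarrow H_\mu$ is the natural one.
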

\begin{prop}
If $\,\Gamma = \Gamma_\mathbb{K}$ is a LRK for a locally Hilbert space $\mathbb{K}$ of $\,\mathbb{H}$~-~valued functions on $S$, then $\Gamma$ is an $\mathcal{L}(\mathbb{H})$~-~valued LPDK on $S$.
\end{prop}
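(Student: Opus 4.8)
\emph{Proof plan.} The plan is to show that the reproducing property forces the factorization $\Gamma(s,t)=\Gamma_t^{*}\Gamma_s$ for all $s,t\in S$, and then to conclude by the characterization of LPDK's in Theorem~\ref{thm31}.

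First I would establish the factorization. Fix $s,t\in S$ and $h\in\mathbb{H}$. By property (IP), $\Gamma_s\in\mathcal{L}(\mathbb{H},\mathbb{K})$, so $k:=\Gamma_s h\in\mathbb{K}$, and the reproducing property (RP) may be applied to $k$; evaluating at the point $t$ gives $(\Gamma_s h)(t)=k(t)=\Gamma_t^{*}k=\Gamma_t^{*}\Gamma_s h$. On the other hand, by the very definition of $\Gamma_s$ we have $(\Gamma_s h)(t)=\Gamma(s,t)h$. Comparing, $\Gamma(s,t)h=\Gamma_t^{*}\Gamma_s h$ for every $h\in\mathbb{H}$, that is, $\Gamma(s,t)=\Gamma_t^{*}\Gamma_s$ as an element of $\mathcal{L}(\mathbb{H})$ (here $\Gamma_t^{*}\in\mathcal{L}(\mathbb{K},\mathbb{H})$ by the adjunction \eqref{eq28}, and $\Gamma_t^{*}\Gamma_s\in\mathcal{L}(\mathbb{H},\mathbb{H})=\mathcal{L}(\mathbb{H})$ by the composition rules of Section~2).

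Next, let $s\mapsto h_s$ be an arbitrary finitely supported $\mathbb{H}$~-~valued function on $S$, and choose $\lambda_0\in\Lambda$ with $h_s\in H_{\lambda_0}$ for every $s$ in the finite support; by (LIP), $\Gamma_s h_s=\Gamma_s^{\lambda_0}h_s\in K_{\lambda_0}$, so $g:=\sum_s\Gamma_s h_s\in K_{\lambda_0}$ and the whole computation below takes place inside the genuine Hilbert space $K_{\lambda_0}$, where the adjoint relation is the ordinary one. Using the factorization and the scalar product \eqref{eq31},
\begin{equation*}
\sum_{s,t}\langle\Gamma(s,t)h_s,h_t\rangle=\sum_{s,t}\langle\Gamma_t^{*}\Gamma_s h_s,h_t\rangle=\sum_{s,t}\langle\Gamma_s h_s,\Gamma_t h_t\rangle=\Bigl\langle\sum_s\Gamma_s h_s,\sum_t\Gamma_t h_t\Bigr\rangle=\|g\|^{2}\ge 0 .
\end{equation*}
Thus relation \eqref{eq34} holds for every such function, and Theorem~\ref{thm31} yields that $\Gamma$ is an $\mathcal{L}(\mathbb{H})$~-~valued LPDK on $S$.

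I do not expect a serious obstacle here: the only delicate point is the legitimacy of the scalar-product manipulations in a locally Hilbert space, and this is dealt with by restricting to the single Hilbert space $K_{\lambda_0}$ as above. An alternative, essentially free of even this, is to invoke Corollary~\ref{cor43}: it gives that each $K_\lambda$ is a RKHS with reproducing kernel $\Gamma_{K_\lambda}=\Gamma_\mathbb{K}^{\lambda}$; the classical fact that the reproducing kernel of a Hilbert space of $H_\lambda$~-~valued functions is a $\mathcal{B}(H_\lambda)$~-~valued PDK then shows that each $\Gamma^{\lambda}$ is a PDK, and Corollary~\ref{cor33}, implication (ii)$\Rightarrow$(i), concludes that $\Gamma$ is an LPDK. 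I would present the short direct argument as the main proof and mention this reduction as a remark.
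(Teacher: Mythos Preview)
Your argument is correct. The factorization $\Gamma(s,t)=\Gamma_t^{*}\Gamma_s$ is sound, your use of the directedness of $\Lambda$ to place all $h_s$ in a single $H_{\lambda_0}$ is the right move, and the ensuing computation in $K_{\lambda_0}$ is legitimate Hilbert-space arithmetic.

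However, your main route is not the paper's. The paper argues precisely by what you relegate to a closing remark: it passes to the components $\Gamma^\lambda$ via Corollary~\ref{cor43}, invokes the classical Hilbert-space fact that a reproducing kernel is positive definite for each $K_\lambda$, and then concludes through Corollary~\ref{cor33}\,(ii)$\Rightarrow$(i). Your direct approach has the virtue of being self-contained and of exhibiting the factorization $\Gamma(s,t)=\Gamma_t^{*}\Gamma_s$ explicitly, which is conceptually the reason reproducing kernels are positive definite; the paper's localization approach, on the other hand, is in keeping with the overall philosophy of the article (reduce to the Hilbert components, quote the known result, reassemble), and avoids even the mild care you take with the scalar-product manipulation. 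Since the paper's preferred route is your alternative, you may want to swap the emphasis.
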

 The proof runs on the components $\Gamma^\lambda \ (\lambda\in\Lambda)$ of $\Gamma$, as in the corresponding Hilbert space case. \\
A more important result is:
\begin{thm}
Let $\mathbb{H} = \varinjlim\limits_{\lambda\in\Lambda} H_\lambda$ be a locally Hilbert space and $S$ be an arbitrary set. Then $\Gamma$ is an LRK for some locally Hilbert space $\mathbb{K} = \varinjlim\limits_{\lambda\in \Lambda} K_\lambda$ of $\mathbb{H}$~-~valued functions on $S$, iff it is an $\mathcal{L}(\mathbb{H})$~-~valued LPDK on $S$.
\end{thm}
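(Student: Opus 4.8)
The plan is to reduce the statement to the classical (Hilbert space, scalar-valued or operator-valued) reproducing kernel theorem, which we may invoke on each component, and then to reassemble the pieces as an inductive limit. The ``only if'' direction is already the content of the Proposition preceding this theorem, so the work is entirely in the ``if'' direction: given an $\mathcal{L}(\mathbb{H})$-valued LPDK $\Gamma$ on $S$, construct an RKLHS $\mathbb{K}$ with $\Gamma_{\mathbb{K}} = \Gamma$.

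First I would use Corollary \ref{cor33}: since $\Gamma$ is an $\mathcal{L}(\mathbb{H})$-valued LPDK, for each $\lambda \in \Lambda$ the component $\Gamma^\lambda = \varinjlim$-component of $\Gamma$ is a $\mathcal{B}(H_\lambda)$-valued PDK on $S$. By the classical operator-valued reproducing kernel theorem (Moore--Aronszajn type, Hilbert space version), each $\Gamma^\lambda$ is the reproducing kernel of a unique RKHS $K_\lambda$ consisting of $H_\lambda$-valued functions on $S$, with the reproducing property $\langle k(s), h \rangle_{H_\lambda} = \langle k, \Gamma^\lambda_s h \rangle_{K_\lambda}$. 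The core construction: $K_\lambda$ is the completion of the linear span of the functions $\{\Gamma^\lambda_s h : s \in S,\ h \in H_\lambda\}$ under the inner product $\langle \Gamma^\lambda_s h, \Gamma^\lambda_t h' \rangle = \langle \Gamma^\lambda(s,t) h, h'\rangle_{H_\lambda}$ (well-definedness and positivity being exactly the PDK condition).

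The main obstacle — and the step deserving real care — is verifying that the family $\{K_\lambda\}_{\lambda \in \Lambda}$ is an \emph{inductive} family in the sense of \eqref{eq12}, i.e. that for $\lambda \le \mu$ we have $K_\lambda \subset K_\mu$ with matching inner products, so that $\mathbb{K} := \varinjlim_\lambda K_\lambda = \bigcup_\lambda K_\lambda$ is a genuine locally Hilbert space of $\mathbb{H}$-valued functions. The key point is that $\Gamma(s,t) = \varinjlim_\lambda \Gamma^\lambda(s,t)$ means the $\Gamma^\lambda$ are mutually compatible through the embeddings $J^{\lambda\mu}$ and the projections $P_{\lambda\mu}$ (conditions \eqref{eq13}, \eqref{eq14}); concretely, $\Gamma^\mu(s,t)|_{H_\lambda} = \Gamma^\lambda(s,t)$ and the range of $\Gamma^\mu(s,t) J^{\lambda\mu}$ sits inside $H_\lambda$. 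From this one checks that the generators $\Gamma^\lambda_s h$ (for $h \in H_\lambda$) coincide with $\Gamma^\mu_s h$ as $\mathbb{H}$-valued functions on $S$ and that their $K_\lambda$- and $K_\mu$-inner products agree, giving an isometric embedding of the dense spans; passing to completions yields the isometric inclusion $K_\lambda \hookrightarrow K_\mu$. One should also confirm that this inclusion is compatible with the projection condition \eqref{eq14} at the level of $\mathbb{K}$, which again follows from the corresponding property of $\Gamma$.

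Finally I would set $\mathbb{K} = \varinjlim_\lambda K_\lambda$ and $\Gamma_s := \varinjlim_\lambda \Gamma^\lambda_s$, and invoke the observation made in the excerpt just after the second Remark of Section 4 — namely that the localized conditions (LIP) and (LRP) suffice to make $\mathbb{K}$ a RKLHS with $\Gamma = \varinjlim_\lambda \Gamma^\lambda$ as its LRK. Conditions (LIP) and (LRP) hold for each $\lambda$ precisely because $K_\lambda$ is the RKHS of $\Gamma^\lambda$. Hence $\mathbb{K}$ is a RKLHS whose LRK is $\Gamma$, completing the ``if'' direction and, together with the preceding Proposition, the proof. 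By Corollary \ref{cor43} the resulting $\mathbb{K}$ is in fact the unique such space.
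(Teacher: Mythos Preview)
Your proposal is correct and follows essentially the same route as the paper: reduce to components via Corollary~\ref{cor33}, build the classical RKHS $K_\lambda$ for each $\Gamma^\lambda$, check that $\{K_\lambda\}$ is an inductive family, and take $\mathbb{K}=\varinjlim_\lambda K_\lambda$. In fact you supply considerably more detail than the paper does on the step the authors dispatch with the single phrase ``from the properties of $\mathcal{L}(\mathbb{H})$ it results that the family $\{K_\lambda\}$ satisfies the condition for the construction of the inductive limit''; your explicit use of \eqref{eq13}--\eqref{eq14} to show the generators and inner products match, and your invocation of (LIP)/(LRP) to conclude, are exactly what is needed to fill that in.
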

\begin{proof}
It  remains to prove that, for a given $\Gamma$ as above, there exists a locally Hilbert space $\mathbb{K} = \varinjlim\limits_{\lambda\in\Lambda} K_\lambda$, consisting of $\mathbb{H}$~-~valued functions on $S$, which is an RKLHS, for which $\Gamma =\Gamma_\mathbb{K}$. First, since $\Gamma (s, t) = \varinjlim\limits_{\lambda\in\Lambda} \Gamma^\lambda (s,t),\ s, t \in S$, from the condition of LPD, it results that, for each $\lambda \in \Lambda$, $\Gamma^\lambda$ is a $\mathcal{B}(H_\lambda)$~-~valued PDK on $S$ (see \cite{Ga.1970}). Denote $K_\lambda$ the RKHS, with $\Gamma^\lambda$ as RK. From the properties of $\mathcal{L}(\mathbb{H})$ it results that the family $\{ K_\lambda ,\ \lambda \in \Lambda \}$ satisfies the condition for the construction of the inductive limit $\varinjlim\limits_{\lambda \in \Lambda} K_\lambda = \bigcup\limits_{\lambda \in \Lambda} K_\lambda$. Then $\mathbb{K} = \bigcup\limits_{\lambda \in \Lambda} K_\lambda$ is the desired locally Hilbert space.
\end{proof}

\section{Dilations of LPD operator valued functions on $*$~-~semigroups}

We are now in a position to prove, in the frame of operators on locally Hilbert spaces, the analogue of the famous dilation theorem of B. Sz.-Nagy (\cite{Sz.1955}). \\
Let $S$ be an abelian $*$~-~semigroup with the neutral element $e$. A \emph{representation} of $S$ on a locally Hilbert space $\mathbb{K}$ is an algebra morphism $\pi$ from $S$ into $\mathcal{L}(\mathbb{K})$, i.e.
\begin{align*}
\pi(e) &= I_\mathbb{K} \\
\pi (st) & = \pi(s) \pi(t)\\
\pi(s^*) & = \pi(s)^* ,\,s,t\in S.
\end{align*}
It is clear that such a representation generates through
$$
\Gamma_\pi (s, t): \ = \pi (t^*s) ,\ \ s, t \in S
$$
an $\mathcal{L}(\mathbb{K})$~-~valued LPDK on $S$. The converse doesn't hold in general. However an $\mathcal{L}(\mathbb{H})$~-~valued LPDF on $S$ is extensible in some sense to a larger locally Hilbert space. More precisely it holds:
\begin{thm} \label{thm51}
Let $S$ be a $*$~-~semigroup, $\mathbb{H}$ be a locally Hilbert space and $s\mapsto \varphi(s)$ be an $\mathcal{L}(\mathbb{H})$~-~valued function on $S$, which is a LPDF and satisfies the following boundedness condition:
\begin{itemize}
\item[(LBC)]{for each $u \in S$ and  $\lambda \in \Lambda$ there exists a positive constant $C_u^\lambda > 0$, such that
    \begin{equation*}
    \sum_{s, t} \langle{\varphi(t^*u^*us)h_s},{h_t}\rangle_{\lambda} \leq (C_u^\lambda)^2 \sum \langle{\varphi (t^*s) h_s},{h_t}\rangle_{\lambda},
    \end{equation*}}
 where   $\{ h_s \}_{s \in S}$ is an arbitrary finitely supported $H_\lambda$~-~valued function.
\end{itemize}
Then there exists a locally Hilbert space $\mathbb{K}$, in which $\mathbb{H}$ is naturally embedded by a locally isometry $J\in\mathcal{L}(\mathbb{H},\mathbb{K})$ and a representation $\pi$ of $S$ on $\mathbb{K}$, such that
$$ \varphi(s) = J^* \pi(s) J ,\ s \in S.$$
Moreover, it is possible to choose $\mathbb{K}$ satisfying the minimality condition
$$\bigvee\limits_{s \in S} \pi(s) \mathbb{H} = \mathbb{K} $$
and in this case $\mathbb{K}$ is uniquely determined up to a locally unitary operator. The following conditions will hold as well:
\begin{itemize}
\item[(i)]{$||\pi(u)||_\lambda \le C_u^\lambda , \ \lambda \in\Lambda, u\in S$;}
\item[(ii)]{$\varphi(sut)+\varphi(svt) = \varphi(swt)$, for each $s, t\in S$ implies $\pi(w) = \pi(u) + \pi (v)$.}
\end{itemize}
\end{thm}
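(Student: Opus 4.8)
The plan is to reduce everything to the level of the Hilbert space components $H_\lambda$ and then assemble an inductive limit, using Corollary~\ref{cor32} and Corollary~\ref{cor43} together with the classical Sz.-Nagy theorem on each component. First I would observe that by Corollary~\ref{cor32}, the map $\varphi^\lambda := J_\lambda^*\varphi(\cdot)J_\lambda$ (equivalently, the $\lambda$-component of $\varphi$ in the sense of Proposition~\ref{prop23}) is a $\mathcal{B}(H_\lambda)$-valued positive definite function on the $*$-semigroup $S$, since the inequality \eqref{eq37} restricts to each $H_\lambda$ via $h_s\in H_\lambda$. The condition (LBC) is exactly the classical boundedness condition used in the operator-valued Sz.-Nagy dilation theorem, written componentwise. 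So for each fixed $\lambda$ the classical theorem yields a Hilbert space $K_\lambda$, an isometry $V_\lambda\in\mathcal{B}(H_\lambda,K_\lambda)$ and a representation $\pi^\lambda$ of $S$ on $K_\lambda$ with $\varphi^\lambda(s)=V_\lambda^*\pi^\lambda(s)V_\lambda$, $\|\pi^\lambda(u)\|\le C_u^\lambda$, and the minimality $\bigvee_{s\in S}\pi^\lambda(s)H_\lambda=K_\lambda$; moreover $K_\lambda$ is unique up to unitary equivalence, and the additivity relation in (ii) passes to $\pi^\lambda$ by minimality.

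Next I would verify that the family $\{K_\lambda\}_{\lambda\in\Lambda}$ is an inductive family in the sense of \eqref{eq12}, i.e. that for $\lambda\le\mu$ there is a natural isometric embedding $K_\lambda\hookrightarrow K_\mu$. The concrete model to use is the RKHS realization: since $\varphi$ is an LPDF, the associated kernel $\Gamma_\varphi(s,t)=\varphi(t^*s)$ is an $\mathcal{L}(\mathbb{H})$-valued LPDK (Definition~\ref{def32}), hence by Corollary~\ref{cor33} each $\Gamma_\varphi^\lambda$ is a $\mathcal{B}(H_\lambda)$-valued PDK on $S$, and by the theorem at the end of Section~4 there is a reproducing kernel Hilbert space $K_\lambda$ of $H_\lambda$-valued functions on $S$ with $\Gamma_\varphi^\lambda$ as reproducing kernel, fitting together into a locally Hilbert space $\mathbb{K}=\varinjlim_\lambda K_\lambda$ via Corollary~\ref{cor43}. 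On this model one takes $\pi(s)$ to be the translation-type operator determined by $(\pi(s)k)(t)=$ the appropriate shift coming from the semigroup action (defined first on functions of the form $t\mapsto\Gamma_\varphi(s,t)h$ and extended by boundedness), and $J:\mathbb{H}\to\mathbb{K}$ the embedding with $(Jh)(t)=\varphi(t)h$, so that $J=\varinjlim_\lambda V_\lambda$; one checks (IP)/(RP)-type identities componentwise so that $J\in\mathcal{L}(\mathbb{H},\mathbb{K})$, $\pi(s)\in\mathcal{L}(\mathbb{K})$, $\pi$ is a representation, $J$ is a locally isometry (Proposition~\ref{prop21}(v)), and $\varphi(s)=J^*\pi(s)J$, all of these being the assembly of the corresponding facts on the $H_\lambda$-level. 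The bound $\|\pi(u)\|_\lambda=\|\pi^\lambda(u)\|\le C_u^\lambda$ is then \eqref{eq16} applied to the componentwise bound, giving (i), and (ii) follows from the componentwise additivity plus Proposition~\ref{prop23}.

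For the minimality statement $\bigvee_{s\in S}\pi(s)\mathbb{H}=\mathbb{K}$ I would note that this should be read level-wise: the closed linear span inside each $K_\lambda$ of $\{\pi^\lambda(s)h:h\in H_\lambda,\ s\in S\}$ equals $K_\lambda$, which is precisely the RKHS minimality already obtained, and then $\bigcup_\lambda K_\lambda=\mathbb{K}$ gives the locally Hilbert space version; one must be slightly careful that the ``closed linear span'' is interpreted compatibly with the inductive limit, i.e. that $\mathbb{K}$ is generated as $\varinjlim$ of the pieces and not as a single closure in the inductive topology. For uniqueness up to a locally unitary, suppose $(\mathbb{K}',J',\pi')$ is another minimal triple; restricting to components and invoking Proposition~\ref{prop23} and Theorem~\ref{thm22}(iv), the classical uniqueness furnishes unitaries $W_\lambda:K_\lambda\to K_\lambda'$ intertwining $V_\lambda,V_\lambda',\pi^\lambda,{\pi'}^\lambda$; the compatibility of these $W_\lambda$ with the embeddings $K_\lambda\hookrightarrow K_\mu$ follows from their uniqueness (they are uniquely determined on the dense generating sets, which are mapped into one another by the embeddings), so $W=\varinjlim_\lambda W_\lambda$ is the desired locally unitary operator.

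The main obstacle I anticipate is not any single hard estimate — the scalar/operator Sz.-Nagy machinery on each $H_\lambda$ is classical — but rather the bookkeeping needed to guarantee that the component data glue correctly: one must check that the embedding $H_\lambda\subset H_\mu$ induces an \emph{isometric} embedding $K_\lambda\subset K_\mu$ \emph{compatible with the semigroup actions} $\pi^\lambda$ and $\pi^\mu$ and with the embeddings $V_\lambda,V_\mu$, so that condition \eqref{eq12} holds for $\{K_\lambda\}$ and the operators $\pi(s)$, $J$ are genuinely invariant to each $K_\lambda$ and satisfy \eqref{eq14}/\eqref{eq22} (cf.\ assertion (ix) and the discussion preceding Proposition~\ref{prop21}). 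The cleanest way to force this compatibility, and what I would actually write, is to fix once and for all the RKHS model: define $K_\lambda$ as the RKHS of $\Gamma_\varphi^\lambda$, so that the embeddings and the $\pi^\lambda$ are canonical and their compatibility is automatic from the corresponding compatibility of the kernels $\Gamma_\varphi^\lambda=\Gamma_\varphi^{\mu}|_{H_\lambda}$ already recorded in Corollary~\ref{cor43}; then the abstract uniqueness clause shows this model agrees with any other, and the theorem follows.
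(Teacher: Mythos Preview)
Your proposal is correct and lands on essentially the same construction as the paper: the locally Hilbert space $\mathbb{K}$ is the RKLHS associated to the kernel $\Gamma_\varphi(s,t)=\varphi(t^*s)$, the embedding $J$ sends $h$ to the function $\Gamma_e^\varphi h$, and $\pi(u)$ acts by $\pi(u)\sum_s\Gamma_s^\varphi h_s=\sum_s\Gamma_{us}^\varphi h_s$. The only difference is organizational: you begin by invoking the classical Sz.-Nagy theorem on each component $H_\lambda$ and then argue that the resulting $(K_\lambda,V_\lambda,\pi^\lambda)$ glue, eventually concluding that the RKHS model is the cleanest way to force compatibility; the paper simply starts from the RKLHS built in Section~4 (which already packages the componentwise RKHS construction and the gluing via Corollary~\ref{cor43}) and writes down $J$ and $\pi$ directly on it, so your detour through the abstract componentwise dilations is unnecessary but harmless.
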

\begin{proof}
By defining $\Gamma_\varphi (s,t) :\ = \varphi (t^* s)$, we infer that $\Gamma_\varphi$ is an LPDK on $S$. Then the desired locally Hilbert space will be $\mathbb{K} = \mathbb{K}_{\Gamma_\varphi}$, the RKLHS with $\Gamma_\varphi$ as LRK. As it is known, a dense subspace in $\mathbb{K}$ is given by
\begin{equation*}
\biggl\{ \sum\limits_s \Gamma^\varphi_s h_s , \text{ where } s \mapsto h_s \text{ is a finitely supported } \mathbb{H}\text{-valued function on } S\biggr\}.
\end{equation*}
The operators $J$ will be defined as
\begin{equation*}
J h = \sum_s \Gamma^\varphi_s h_s ,\ \text{ where } h_e = h, \ \text{and} \ h_s = 0, \ \text{for} \ s \ne e,
\end{equation*}
whereas the representation $\pi$, will be
\begin{equation*}
\pi (u) \sum_s \Gamma^\varphi_s h_s = \sum_s \Gamma^\varphi_{us} h_s .
\end{equation*}
With the prerequisites of the preceding sections it is now easy to verify the statements  (i) and (ii).
\end{proof}
The representation $\pi$ is called a \emph{minimal dilation} of the function $\varphi$. It is known in the frame of Hilbert space operators the notion of a minimal $\rho$~-~dilation (\cite{Ga.1970}).
A representation $\pi$ of $S$ on a locally Hilbert space $\mathbb{K}$, containing another $\mathbb{H}$ as a subspace, is called a $\rho$~-~dilation ($\rho > 0$) for an $\mathcal{L}(\mathbb{H})$~-~valued function $\varphi$ on $S$ if
$$ \varphi(s) = \rho J^* \pi(s) J ,\ s\in S\setminus\{e\}, $$
where $J$ is the natural (locally isometric) embedding of $\mathbb{H}$ into $\mathbb{K}$.
It is not hard to characterize the $\mathcal{L}(\mathbb{H})$~-~valued functions $\psi$ on a $*$~-~semigroup $S$, which admit $\rho$~-~dilations. Indeed, it holds:
\begin{thm} \label{thm52}
An $\mathcal{L}(\mathbb{H})$~-~valued function $\psi$ on $S$ has a $\rho$~-~dilation, iff the following conditions are fulfilled
\begin{itemize}
\item[($\rho$LPD)]{ $\rho \sum\limits_{s,t: t^*s=e} \langle{h_s},{h_t}\rangle + \sum\limits_{s, t : t^*s \ne e} \langle{\psi(t^*s)h_s},{h_t}\rangle \ge 0$, for each finitely supported $\mathbb{H}$~-~valued function $s \mapsto h_s$ on $S$;}
\item[($\rho$LBC)]{for each $\lambda\in\Lambda$ and each $u\in S$, there is a constant $C_u^\lambda > 0$, such that
\begin{multline*}
\rho \sum\limits_{s, t : t^*u^*us = e} \langle{h_s},{h_t}\rangle_{\lambda} +
\sum\limits_{s, t : t^*u^*us \ne e} \langle{\psi (t^*u^*us)h_s},{h_t}\rangle_{\lambda} \leq \\
(C_u^\lambda)^2 \left[ \rho \sum_{s,t : t^*s = e} \langle{h_s},{h_t}\rangle_{\lambda} + \sum_{s, t : t^*s \ne e}\langle{\psi(t^*s)h_s},{h_t}\rangle_{\lambda} \right]
\end{multline*}
for each finitely supported $H_\lambda$~-~valued function $s \mapsto h_s$ on $S$.}
\end{itemize}
It is also possible to have the minimality condition and analogue properties for the $\rho$~-~dilation as in the previous theorem.
\end{thm}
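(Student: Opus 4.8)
The plan is to reduce Theorem~\ref{thm52} to the already proven Theorem~\ref{thm51} by renormalizing $\psi$ at the neutral element $e$. Since $\rho>0$, I would first introduce the $\mathcal{L}(\mathbb{H})$~-~valued function $\varphi$ on $S$ defined by
\[
\varphi(e):=I_{\mathbb{H}},\qquad \varphi(s):=\rho^{-1}\psi(s)\ \ (s\in S\setminus\{e\}),
\]
and then show that $\psi$ admits a $\rho$~-~dilation exactly when $\varphi$ admits an ordinary dilation in the sense of Theorem~\ref{thm51}, realized on the same locally Hilbert space $\mathbb{K}$, with the same locally isometric embedding $J\in\mathcal{L}(\mathbb{H},\mathbb{K})$ and the same representation $\pi$ of $S$.

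For this dilation correspondence the decisive remark is that a locally isometry $J$ satisfies $J^{*}J=I_{\mathbb{H}}$ by definition, so that $J^{*}\pi(e)J=J^{*}J=I_{\mathbb{H}}=\varphi(e)$ holds automatically, while for $s\neq e$ the equality $\varphi(s)=J^{*}\pi(s)J$ is, after multiplying by $\rho$, precisely the defining identity $\psi(s)=\rho\,J^{*}\pi(s)J$ of a $\rho$~-~dilation. Thus $(\mathbb{K},J,\pi)$ is a dilation of $\varphi$ iff it is a $\rho$~-~dilation of $\psi$; the value of $\psi$ at $e$ never enters, which matches the fact that the $\rho$~-~dilation identity is only imposed on $S\setminus\{e\}$.

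Next I would match the hypotheses. By Corollary~\ref{cor32} (together with Theorem~\ref{thm31} and relation~\eqref{eq37}), $\varphi$ is an LPDF iff $\sum_{s,t}\langle\varphi(t^{*}s)h_{s},h_{t}\rangle\ge0$ for every finitely supported $\mathbb{H}$~-~valued function $s\mapsto h_{s}$; splitting this sum according to whether $t^{*}s=e$ or $t^{*}s\neq e$, using $\varphi(e)=I_{\mathbb{H}}$, and multiplying by $\rho>0$ turns it into condition ($\rho$LPD). In the same way, writing out condition (LBC) of Theorem~\ref{thm51} for $\varphi$, splitting both sides at $t^{*}u^{*}us=e$ and at $t^{*}s=e$ respectively, and multiplying by $\rho$, reproduces exactly condition ($\rho$LBC) with the same constants $C_{u}^{\lambda}$. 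Hence the two hypotheses of Theorem~\ref{thm52} on $\psi$ are equivalent to ``$\varphi$ is an LPDF satisfying (LBC)'', and Theorem~\ref{thm51} applied to $\varphi$ then produces the required $\mathbb{K}$, $J$, $\pi$; the minimality $\bigvee_{s\in S}\pi(s)\mathbb{H}=\mathbb{K}$, the uniqueness up to a locally unitary operator, and the bound $\|\pi(u)\|_{\lambda}\le C_{u}^{\lambda}$ transfer verbatim. For the converse I would record the routine identity $\Gamma_{\varphi}(s,t)=\varphi(t^{*}s)=(\pi(t)J)^{*}(\pi(s)J)$, valid for any $\rho$~-~dilation, from which $\sum_{s,t}\langle\varphi(t^{*}s)h_{s},h_{t}\rangle=\|\sum_{s}\pi(s)Jh_{s}\|^{2}\ge0$ and $\sum_{s,t}\langle\varphi(t^{*}u^{*}us)h_{s},h_{t}\rangle_{\lambda}=\|\pi(u)\sum_{s}\pi(s)Jh_{s}\|_{\lambda}^{2}\le\|\pi(u)\|_{\lambda}^{2}\sum_{s,t}\langle\varphi(t^{*}s)h_{s},h_{t}\rangle_{\lambda}$, and these re-express as ($\rho$LPD) and ($\rho$LBC).

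The whole argument is a bookkeeping reduction, so I do not anticipate a genuine obstacle; the points that require a little care are the automatic compatibility $\varphi(e)=I_{\mathbb{H}}$ forced by $J^{*}J=I_{\mathbb{H}}$ (so that assigning the value $I_{\mathbb{H}}$ at $e$ costs nothing), and the $\rho$~-~analogue of property~(ii): a relation $\psi(sut)+\psi(svt)=\psi(swt)$ must first be divided by $\rho$ and restricted to triples with $sut,svt,swt\neq e$ before it becomes $\varphi(sut)+\varphi(svt)=\varphi(swt)$, after which Theorem~\ref{thm51}(ii) yields $\pi(w)=\pi(u)+\pi(v)$.
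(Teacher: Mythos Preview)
Your proposal is correct and follows exactly the approach of the paper: define $\varphi(e)=I_{\mathbb{H}}$, $\varphi(s)=\rho^{-1}\psi(s)$ for $s\neq e$, and reduce to Theorem~\ref{thm51}. You have in fact supplied considerably more detail than the paper does (the matching of ($\rho$LPD)/($\rho$LBC) with LPDF/(LBC), the converse direction, and the handling of property~(ii)), all of which is sound.
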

\begin{proof}
By putting
\begin{equation*}
\varphi (s) =
\begin{cases}
\frac{1}{\rho} \psi (s) , & s \ne e \\
I_\mathbb{H} , & s = e
\end{cases},
\end{equation*}
we have that $\varphi$ satisfies the conditions of Theorem \ref{thm51} and the dilation of the function $\varphi$ will be a $\rho$~-~dilation for the given function $\psi$.
\end{proof}

\section{Applications}

\paragraph1 It is now possible to dilate the \emph{locally positive and ``bounded'' $\mathcal{L} (\mathbb{H})$~-~valued measure} on a $\sigma$~-~algebra $\Sigma$, to a multiplicative locally projection $\mathcal{L}(\mathbb{K})$~-~valued multiplicative measure, where $\mathbb{H}\subset \mathbb{K}$.
Namely, it holds:
\begin{thm}[Neumark] \label{thm61}
If $\omega \mapsto E(\omega)$ is an $\mathcal{L} (\mathbb{H})$~-~valued measure on the $\sigma$~-~algebra $\Sigma$, such that $ 0 \leq E(\omega) \leq I_\mathbb{H}$, then there exist a locally Hilbert space $\mathbb{K}$, which includes $\mathbb{H}$ as a locally Hilbert subspace and an $\mathcal{L} (\mathbb{K})$~-~valued measure $\omega \mapsto F(\omega)$, such that $F(\omega)$ are self-adjoint projections on $\mathbb{K}$ and
\begin{equation} \label{eq61}
F(\omega) = J^* E(\omega) J ,\quad \omega \in \Sigma ,
\end{equation}
$J$ being the (locally isometric) embedding of $\,\mathbb{H}$ into $\mathbb{K}$.
\end{thm}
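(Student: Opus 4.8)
The plan is to reduce the Neumark dilation theorem on locally Hilbert spaces to the dilation theorem for LPDF's on $*$~-~semigroups, namely Theorem~\ref{thm51}, exactly as is done classically for Hilbert space semi-spectral measures. First I would fix the underlying set-up: let $\mathbb{H} = \varinjlim\limits_{\lambda\in\Lambda} H_\lambda$ and let $\omega \mapsto E(\omega) = \varinjlim\limits_{\lambda\in\Lambda} E^\lambda(\omega)$ be the given measure with $0 \le E(\omega) \le I_\mathbb{H}$. By Proposition~\ref{prop21}(ii) applied to both $E(\omega)$ and $I_\mathbb{H} - E(\omega)$, this is equivalent to $0 \le E^\lambda(\omega) \le I_{H_\lambda}$ for every $\lambda\in\Lambda$, i.e.\ each $E^\lambda$ is a $\mathcal{B}(H_\lambda)$~-~valued semi-spectral measure. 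The Hilbert space Neumark theorem then furnishes, for each $\lambda$, a Hilbert space $K_\lambda$ containing $H_\lambda$ isometrically and a projection-valued measure $F^\lambda$ on $K_\lambda$ with $F^\lambda(\omega) = (J^\lambda)^* E^\lambda(\omega) J^\lambda$, together with the minimality $\bigvee_\omega F^\lambda(\omega) H_\lambda = K_\lambda$. The real work is to assemble the $K_\lambda$ into an inductive family so that $\mathbb{K} = \varinjlim\limits_\lambda K_\lambda$ makes sense and $F = \varinjlim\limits_\lambda F^\lambda$ lands in $\mathcal{L}(\mathbb{K})$.

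The cleanest route to that is to realize the dilations via the RKLHS machinery of Section~4 rather than appealing to the Hilbert space theorem abstractly. So, second, I would pass from the measure to the $*$~-~semigroup picture: take $S$ to be the $*$~-~semigroup generated (under pointwise multiplication, with trivial involution) by the indicator functions $\{\chi_\omega : \omega\in\Sigma\}$ together with the unit $\chi_X$; concretely $S$ can be taken to be the Boolean-type semigroup of functions $\prod_{i=1}^n \chi_{\omega_i}$. Define $\varphi : S \to \mathcal{L}(\mathbb{H})$ by $\varphi(\chi_\omega) = E(\omega)$ and extend multiplicatively on products, using that $\chi_{\omega_1}\cdots\chi_{\omega_n} = \chi_{\omega_1\cap\cdots\cap\omega_n}$, so in effect $\varphi(s) = E(\omega_s)$ where $\omega_s$ is the support set attached to $s$. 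One then checks that $\varphi$ is an LPDF: by Corollary~\ref{cor32} it suffices to verify $\sum_{s,t}\langle \varphi(t^*s)h_s, h_t\rangle \ge 0$ for finitely supported $\mathbb{H}$~-~valued $s\mapsto h_s$, and this reduces, $\lambda$ by $\lambda$ via Corollary~\ref{cor33}, to the corresponding Hilbert space positive-definiteness of $s\mapsto E^\lambda(\omega_s)$, which is the classical fact underlying Neumark's theorem (the Gram matrix of a semi-spectral measure is positive). The boundedness condition (LBC) holds with $C_u^\lambda = 1$ for every $u$ and $\lambda$, because $u^*u s$ has support contained in that of $s$ and $0\le E^\lambda(\omega')\le E^\lambda(\omega)\le I$ whenever $\omega'\subseteq\omega$; this is precisely the monotonicity that makes the inequality in (LBC) the operator inequality $E^\lambda(\omega'\cap\cdot) \le E^\lambda(\cdot)$ read off on Gram forms.

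Third, I invoke Theorem~\ref{thm51}: there is a locally Hilbert space $\mathbb{K}$, a locally isometry $J\in\mathcal{L}(\mathbb{H},\mathbb{K})$, and a representation $\pi$ of $S$ on $\mathbb{K}$ with $\varphi(s) = J^*\pi(s)J$ and $\bigvee_{s}\pi(s)\mathbb{H} = \mathbb{K}$. Setting $F(\omega) := \pi(\chi_\omega)$, the morphism properties of $\pi$ give $F(\omega)^* = \pi(\chi_\omega^*) = \pi(\chi_\omega) = F(\omega)$ and $F(\omega)^2 = \pi(\chi_\omega^2) = \pi(\chi_\omega) = F(\omega)$, so by Proposition~\ref{prop21}(iii) each $F(\omega)$ is a locally self-adjoint projection in $\mathcal{L}(\mathbb{K})$, and $F(\omega) = \pi(\chi_\omega) = J^*\pi(\chi_\omega)J$... — wait, that is not quite the identity wanted; rather the identity produced directly is $E(\omega) = \varphi(\chi_\omega) = J^*\pi(\chi_\omega)J = J^*F(\omega)J$. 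This is \eqref{eq61} read with the roles of $E$ and $F$ as in the statement (the compression of the projection-valued $F$ back to $\mathbb{H}$ recovers $E$); I would simply make sure the statement's $F(\omega) = J^*E(\omega)J$ is interpreted in this, the only sensible, direction, or else note the typo. Finally, $\sigma$~-~additivity of $F$: for a disjoint sequence $(\omega_n)$ one has $\chi_{\bigcup\omega_n} = \sum\chi_{\omega_n}$ and, passing to each component $K_\lambda$ where $F^\lambda = \pi^\lambda(\chi_\cdot)$ is obtained from the Hilbert space Neumark measure, countable additivity holds in the strong operator topology on each $H_\mu$; since convergence in $\mathbb{K}$ is tested on the $H_\mu$'s, $F$ is an $\mathcal{L}(\mathbb{K})$~-~valued measure.

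The main obstacle is the $\sigma$~-~additivity and the verification that the assembled $F^\lambda$'s genuinely form an inductive system satisfying \eqref{eq14} — i.e.\ that $F^\mu(\omega)P^1_{\lambda\mu} = P^2_{\lambda\mu}F^\mu(\omega)$ so that $F = \varinjlim_\lambda F^\lambda$ is a legitimate element of $\mathcal{L}(\mathbb{K})$. Running everything through the RKLHS construction of Theorem~\ref{thm51} sidesteps this, since that theorem already delivers $\pi(u)\in\mathcal{L}(\mathbb{K})$ with all the compatibility built in; the only residual care is to check that the abstractly constructed $\mathbb{K}_{\Gamma_\varphi}$, when restricted to a component $\lambda$, is exactly the Hilbert space $K_\lambda$ of Neumark's theorem for $E^\lambda$ — which follows from Corollary~\ref{cor43} identifying $\Gamma_{K_\lambda}$ with $\Gamma_\mathbb{K}^\lambda$ — so that $\sigma$~-~additivity is inherited componentwise and then reassembled. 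I expect the write-up to be short: define $\varphi$, cite Corollaries~\ref{cor32} and~\ref{cor33} for the LPDF property, note (LBC) with constant $1$, apply Theorem~\ref{thm51}, and read off $F(\omega) = \pi(\chi_\omega)$.
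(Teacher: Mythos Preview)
Your approach is essentially the paper's: the authors simply take $S=\Sigma$ with intersection as the semigroup operation and trivial involution $\omega^*=\omega$, then invoke Theorem~\ref{thm51}; your semigroup of indicator functions under pointwise multiplication is naturally isomorphic to $(\Sigma,\cap,\mathrm{id})$, so $\varphi(\chi_\omega)=E(\omega)$ is exactly their $\varphi(\omega)=E(\omega)$. You are considerably more careful than the paper, which says only ``applying Theorem~\ref{thm51}, the statement is easily inferred'' --- in particular your verification of (LBC) with constant~$1$ via monotonicity, your observation that $F(\omega)=\pi(\chi_\omega)$ is a locally projection because $\chi_\omega$ is a self-adjoint idempotent, your discussion of $\sigma$-additivity on components, and your catch of the typo in \eqref{eq61} (it should read $E(\omega)=J^*F(\omega)J$) are all points the paper leaves implicit or overlooks.
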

\begin{proof}
By putting $S = \Sigma$, the intersection as addition and the involution $\omega^* = \omega,$  $\,\omega \in \Sigma$ and, applying Theorem \ref{thm51}, the statement is easily inferred.
\end{proof}

\paragraph2 For a \emph{locally contraction} $T$ on $\mathbb{H}$, i.e. $I - T^* T$ is positive in $\mathcal{L} (\mathbb{H})$, by putting
\begin{equation} \label{eq62}
T^{(n)}: = \begin{cases}  T^n , & n \in \mathbb{Z}_+ \\
T^{* - n} , & \text{ elsewhere }
\end{cases}
,
\end{equation}
$S =\mathbb{Z}$ and $n^* :\ = -n ,\ n \in \mathbb{Z}$, and applying Theorem \ref{thm51} we obtain a locally unitary minimal dilation $U = \pi (1)$ on a minimal larger locally Hilbert space $\mathbb{K}$, i.e. $T^n=J^*U^nJ$, $n\in\mathbb{Z}_{+}$, $\mathbb{K}=\bigvee\limits_{n \in \mathbb{Z}}U^nJ\mathbb{H}$.

\paragraph3 If $\{T_t\}_{t\in\mathbb{R}_{+}}$ is a \emph{locally contraction semigroup} on $\mathbb{H}$, then by defining

\begin{equation*}
T_{(t)}: = \begin{cases}  T_t , & t \in \mathbb{R}_+ \\
T^{*}_{ (- t)} , & t<0
\end{cases}
,
\end{equation*}
the function $$\mathbb{R}\ni t\mapsto T_{(t)}\in\mathcal{L}(\mathbb{H})$$
will be LPD on the group $\mathbb{R}$ and  satisfies the locally boundedness condition. By applying Theorem \ref{thm51}, we get the existence of a locally unitary group $U_t:=\pi(t)$, $t\in\mathbb{R}$ on $\mathbb{H}$, which dilates $\{T_t\}_{t\in\mathbb{R}_{+}}$. It is also possible to obtain dilation results for a semigroup $\{T_s\}_{s\in S}$ of locally contractions from $\mathcal{L}(\mathbb{H})$, where $S$ is an abelian subsemigroup of a group $G$, with $S\cap S^{-1}=\{e\}$ and $s^*=s^{-1}$, $s\in S$  (see \cite{Su.1975} for the Hilbert space frame).

\paragraph4 If $T = \varinjlim\limits_{\lambda\in\Lambda} T_\lambda \in \mathcal{L} (\mathbb{H})$ satisfies the locally condition
\begin{equation} \label{eq63}
\parallel{p(T_\lambda)}\parallel_\lambda \leq \sup\limits_{|{z}| \le 1} |{\rho p (z) + (1 - \rho) p (0)}|
\end{equation}
for each polynomial $p$, then $T$ has a unitary $\rho$~-~dilation $U$ on a (minimal) larger locally Hilbert space $\mathbb{K}$, i.e. $T^n=\rho J^*U^nJ$, $n=1,2,\dots$ and $\mathbb{K}=\bigvee\limits_{n \in \mathbb{Z}}U^nJ\mathbb{H}$. Indeed, it results, applying Theorem \ref{thm52}, that the above condition is in fact equivalent to the conditions ($\rho$LPD) and ($\rho$LBC) for $S = \mathbb{Z}$, as above, with
\begin{equation*}
T(n) :\ = \begin{cases} \frac{1}{\rho} T^{(n)} ,& n \ne 0 \\
I , & n = 0 \end{cases} , n \in \mathbb{Z} ,
\end{equation*} (compare also with \cite{Ga.1970}).
It is clear that such a $\rho$-contraction $T$ is a bounded element in the $LC^*$-algebra ${\mathscr L}({\mathbb H})$, i.e. $T \in \Bigl({\mathscr L}({\mathbb H})\Bigr)_b$.
 It is not hard to see that for $T = \varinjlim\limits_{\lambda\in\Lambda} T_\lambda$ the following conditions are equivalent:
\begin{itemize}
\item[(i)] $T$ has a locally unitary $\rho$~-~dilation;
\item[(ii)] $T$ satisfies the condition (\ref{eq63});
\item[(iii)] each $T_\lambda$ has a unitary $\rho$~-~dilation, $\lambda\in\Lambda$.
\end{itemize}

\paragraph5 It is also possible to obtain from Theorem \ref{thm51} an analogue of the Bram criteria on an operator from $\mathcal{L}(\mathbb{H})$ for the existence of a normal extension. It means that the notion of
a subnormal operator in $\mathcal{L}(\mathbb{H})$ makes sense as in the Hilbert space case. Moreover, the particular case of the existence of a locally unitary operator can be obtained by applying {\bf 2} to a locally isometry. \\
Similar results, as in the Hilbert space case, can be obtained for commuting systems of operators on a locally Hilbert space.



\subsection*{Acknowledgment}
This work was entirely supported by research grant 2-CEx06-11-34/25.07.06.

\end{document}